\newcommand{\cf}{{\it c.f.}}
\newcommand{\eg}{{\it e.g.}}
\newcommand{\lhs}{{\it l.h.s.}}
\newcommand{\rhs}{{\it r.h.s.}}
\newcommand{\etc}{{\it etcetera}}
\DeclareMathOperator{\weg}{\backslash}
\newcommand{\RR}{{\mathbb R}}
\newcommand{\NN}{{\mathbb N}}
\newcommand{\scrB}{{\mathscr B}}
\newcommand{\scrF}{{\mathscr F}}
\newcommand{\scrG}{{\mathscr G}}
\newcommand{\scrP}{{\mathscr P}}
\newcommand{\scrX}{{\mathscr X}}
\newcommand{\samplen}{{X^{n}}}
\newcommand{\realizationn}{{x^{n}}}
\newcommand{\tht}{{\theta}}
\newcommand{\Tht}{{\Theta}}
\newcommand{\set}[1]{\left\{ #1 \right\}}
\newcommand{\ft}[2]{{\textstyle{\frac{#1}{#2}}}}
\newcommand{\conv}[1]%
  {{\mathrel{\,\xrightarrow{\widthof{\,#1\,}}\,}}}
\newcommand{\convas}[1]%
  {{\mathrel{\,\xrightarrow{\widthof{\,#1\text{-a.s.}\,}}\,}}}
\newcommand{\convprob}[1]%
  {{\mathrel{\,\xrightarrow{\widthof{\,#1\,}}\,}}}
\newcommand{\convweak}[1]%
  {{\mathrel{\,\xrightarrow{\widthof{\,#1\text{-w.}\,}}\,}}}
\newcommand{\twobytwo}[4]%
  {\left(\begin{array}{cc} #1 & #2 \\ #3 & #4 \end{array}\right)}
\newcommand{\twovec}[2]%
  {\left({\begin{array}{c} #1\\#2 \end{array}}\right)}
\newcommand{\ceiling}[1]{\left\lceil #1 \right\rceil}
\newcommand{\floor}[1]{\left\lfloor #1 \right\rfloor}
\newcommand{\rh}[1]{\left(#1\right)}
\newtheoremstyle{customtheorem}
  {0.5em}
  {0.2em}
  {\itshape}
  {}
  {\scshape}
  {}
  {1ex}
  {}
\theoremstyle{customtheorem}
\newtheorem{theorem}{Theorem}[section]
\newtheorem{lemma}[theorem]{Lemma}
\newtheorem{proposition}[theorem]{Proposition}
\newtheorem{corollary}[theorem]{Corollary}
\newtheorem{definition}[theorem]{Definition}
\newtheorem{assumption}[theorem]{Assumption}
\newtheoremstyle{customremark}
  {0.5em}
  {0.2em}
  {}
  {}
  {\scshape}
  {}
  {1ex}
  {}
\theoremstyle{customremark}
\renewenvironment{proof}{\par\noindent{\scshape Proof}\;}{\hfill\qedsymbol\par}
\newtheorem{remark}[theorem]{Remark}
\newtheorem{example}[theorem]{Example}
\begin{document}

\begin{frontmatter}

\title{Uncertainty quantification and testing in a stochastic block model with two unequal communities}
\runtitle{Uncertainty quantification and testing for communities}

\begin{aug}
  \author{\fnms{J.} \snm{van Waaij}\thanksref{m2}
    \ead[label=e2]{jvw@math.ku.dk}}
  \and
  \author{\fnms{B. J. K.} \snm{Kleijn}\thanksref{m1}\ead[label=e1]{b.kleijn@uva.nl}}
  \runauthor{J. van Waaij and B. Kleijn}
  \address{\thanksmark{m2}
    Department of Mathematical Sciences,
    University of Copenhagen\\
    Universitetsparken 5,
    DK-2100 Copenhagen,
    Denmark\\
    \printead{e2}
  }
  \affiliation{Korteweg-de~Vries Institute for Mathematics,
    University of Amsterdam\\
    Department of Statistics, University of Padova}
  \address{\thanksmark{m1}
    Korteweg-de Vries Institute for Mathematics,
    University of Amsterdam\\
    P.O. Box 94248,
    1090 GE Amsterdam,
    The Netherlands\\
    \printead{e1}
  }
\end{aug}

\begin{abstract}
\indent 
We show posterior convergence for the community structure in the planted bi-section model, for several interesting priors. Examples include where the label on each vertex is iid Bernoulli distributed, with some parameter $r\in(0,1)$. The parameter $r$ may be fixed, or equipped with a beta distribution.  
We do not have constraints on the class sizes, which might be as small as zero, or include all vertices, and everything in between. This enables us to test between a uniform (Erd\H os-R\'enyi) random graph with no distinguishable community or the planted bi-section model. The exact bounds for  posterior convergence enable us to convert credible sets into confidence sets. Symmetric testing with posterior odds is shown to be consistent. 
%
%
\end{abstract}

\begin{keyword}[class=MSC]
\kwd{62G15}
\kwd{62G05}
\kwd{82B26}
\kwd{05C80}
\end{keyword}

\begin{keyword}
\kwd{community detection}
\kwd{sparse random graphs}
\kwd{phase transition}
\kwd{posterior consistency}
\kwd{uncertainty quantification}
\end{keyword}

\end{frontmatter}


\section{Communities in random graphs}
\label{sec:intro}

The stochastic block model is a generalization of the Erd\H os-R\'enyi
random graph model \citep{Erdos59} where two vertices are connect with probability $p_n$. 
 Stochastic block models
\citep{Holland83} are similar but
concern random graphs with vertices that belong to one of several
classes and edge probabilities that depend on those classes.
If we think of the graph $X^n$ as data and the class
assignments of the vertices as unobserved, an interesting
statistical challenge presents itself regarding estimation of
(and other forms of inference on) the vertices' class assignments,
a task referred to as \emph{community detection} \citep{Girvan02}.
The stochastic block model and its generalizations have
applications in physics, biology, sociology, image processing,
genetics, medicine, logistics, \etc\ and are widely
employed as canonical models for the study of clustering and community
detection \citep{Fortunato10,Abbe18}.

In an asymptotic sense one may wonder under which conditions
on edge probabilities, community detection can be done in a
`statistically consistent' way as the number of vertices $n$
grows; particularly, whether it is possible to estimate the
true class assignments correctly (\emph{exact recovery}),
or correctly for a fraction of the vertices that goes to one
(\emph{almost-exact recovery}), with high probability (see
definitions~\ref{def:exact} and~\ref{def:detect} for details).

Here and in \citep{Abbe16,Massoulie14,Mossel16}, the community
detection problem is studied in the context
of the so-called \emph{planted bi-section model}, which is a
stochastic block model with two classes, 
and edge probabilities $p_n$ (within-class) and $q_n$
(between-class). A famous sufficient
condition for exact recovery of the
class assignment in the planted bi-section model comes from
\citep{Dyer89}: if there exists a constant $A>1$ such that,
$p_n-q_n\geq A n^{-1}\log n$, then community detection by
minimization of the number of edges between estimated classes
achieves exact recovery.
In \citep{Decelle11a,Decelle11b}, it was conjectured that
almost-exact recovery is possible in block models,
if $n(p_n-q_n)^2 > 2 (p_n+q_n)$. \cite{Mossel16}  prove a
definitive assertion: \emph{almost-exact recovery} is
possible (by any estimator or algorithm), if and only if,
\begin{equation}
  \label{eq:MNSdetect}
  \frac{n(p_n-q_n)^2}{p_n+q_n}\to\infty.
\end{equation}
An analogous claim in the Chernoff-Hellinger phase was first
considered more rigorously in \citep{Massoulie14} and later
confirmed, both from a probabilistic/statistical perspective
in \citep{Mossel15,Mossel16}, and independently from an
information theoretic perspective in \citep{Abbe16}.
Defining $a_n$ and $b_n$ by $np_n=a_n\log n$ and $nq_n=b_n\log n$
and assuming that $C^{-1}\leq a_n, b_n\leq C$ for all but finitely
many $n\geq1$, the class assignment in the planted bi-section model
can be \emph{recovered exactly}, if and only if,
\begin{equation}
  \label{eq:mnscritical}
  (a_n+b_n-2\sqrt{a_nb_n}-2)\log n + \log\log n\to \infty,
\end{equation}
(see \citep{Mossel16}). 

Estimation methods used for the community detection problem include
spectral clustering (see \citep{Krzakala13} and many others),
maximization of the likelihood and other modularities
\citep{Girvan02,Bickel09,Choi12,Amini13}, semi-definite programming
\citep{Hajek16,Guedon16}, and penalized ML detection of communities
with minimax optimal misclassification ratio 
\citep{Zhang16,Gao17}.
More generally, we refer to \citep{Abbe18} and
the very informative introduction of \citep{Gao17}
for extensive bibliographies and a more comprehensive discussion.
Bayesian methods have been popular throughout, \eg\ the original
work \citep{Snijders97}, the
work of \citep{Decelle11a,Decelle11b} and
more recently, \citep{Suwan16}, based on an
empirical prior choice, and \citep{Mossel16b}.
The machine learners' interest in the stochastic block model has
generated a wealth of algorithms that estimate the class assignment.
We mention only maximization of the likelihood or other modularities
\citep{Girvan02,Bickel09} and refer to the discussions in
\citep{Zhang16,Gao17}. 

In this paper we derive  exact recovery by means of posterior convergence for an interesting family of priors. Under the prior each vertex has label $\theta_i$, which are i.i.d. Bernoulli distributed with some parameter $r\in(0,1)$. When $r=1/2$, this is the uniform prior on the parameter space. The parameter $r$ might be fixed or equipped with a beta distribution. 
Another prior that we consider is the uniform prior on the size of the classes, and conditionally on the class sizes, the uniform prior on all labelings with these class sizes.

 We do not put restrictions on the size of the classes, which might be everything between 0 and $n$. We show posterior convergence for all class labelings, regardless of the size of the classes. This is a new contribution to the literature. To our knowledge all papers on consistency for the planted bi-section model require that both classes are (approximately) of equal size. Our relaxation of this requirement, has as additional benefit, that it allows us to test between the Erd\H os-R\'enyi model (basically a planted bi-section model with a class of size 0 and a class of size $n$) and the planted bi-section model. Furthermore, our precise bounds for exact recovery enable us to calculate confidence levels of credible sets.   

In the sparse Chernoff-Hellinger phase, where the edge degrees grow logarithmically, we derive exact recovery under conditions on the sparsity that closely resemble \cref{eq:mnscritical}, however, \citep{Mossel16} assume that the two classes are both of size $n/2$, where we allow it to be everything between zero and $n$. 
We derive exact bounds on the expected posterior mass of the true parameter, which enables us to derive confidence levels for credible sets. 
Finally Bayesian testing with posterior odds is considered, where we show consistency for testing between different class sizes. 

In the even sparser Kersten-Stigum phase where the edge degree is constant we derive almost-exact recovery. Our condition on the sparsity is equivalent to the necessary and sufficient condition \cref{eq:MNSdetect}, which shows that our results are sharp. 
In this ultra sparse Kersten-Stigum phase  we need to enlarge the credible sets in order to convert them into confidence sets.

In \cref{sec:pbm} we describe the model and give a general theorem for posterior convergence in the planted bi-section model. In \cref{sec:priors} we describe the priors that we consider and in \cref{sec:PBMposterior} we derive posterior convergence under the different sparsity regimes. Confidence sets are considered in \cref{sec:pbmuncertainty} and hypothesis testing in \cref{sec:hypothesistesting}. The proofs are defered to the appendix. \Cref{app:defs} establishes notation and basic Bayesian definitions.

\section{The planted bi-section model}
\label{sec:pbm}

In a stochastic block model,
each vertex is assigned to one of $K\geq2$ classes through an
unobserved \emph{class assignment vector} $\theta$.
Each vertex belongs to a class and any edge occurs (independently
of others) with a probability depending on 
 whether vertices that it connects belong to the same class or not. In the \emph{planted bi-section
model}, there are only two classes ($K=2$). The smallest class has $m$ vertices, and the largest $n-m$. We denote by $\theta$ the class assignment vector with 
components $\theta_{1},\ldots,\theta_{n}\in\{0,1\}$), where 0 denotes the largest class and 1 the smallest. The total parameter space is 
$\Theta_n=\bigcup_{m=0}^{\floor{n/2}}\Theta_{n,m}$. 
By $\Theta_{n,m}, m<n/2$, 
we denote the subset of  $\theta\in \{0,1\}^{n}$ with $\sum_{i=1}^n\theta_i=m$. 
In order to guarantee 
identifiability, when $n$ is even, 
we denote by $\Theta_{n,n/2}$ 
all labels $\theta$ with $\sum_{i=1}^n\theta_i=n/2$ 
and $\theta_1=0$. (Because $\theta$ and $(1-\theta_1,\ldots,1-\theta_n)$ 
induce the same law, as we will see later.) Note that $\Theta_{n,m}$ has $\binom nm$ elements, and when $n$ 
is even, $\Theta_{n,n/2}$ has $\frac12\binom n{n/2}$ elements. The full parameter set $\Theta_n$ has $2^{n-1}$ elements.  It 
is noted explicitly that \(m=0\) is also allowed, which allows us to test between the Erd\H os-R\'enyi graph model and a bi-section model. In case \(m=0\), \(\Theta_{n,0}\) consist of only one element: the \(n\)-vector \((0,\ldots, 0)\).

The space in which the random graph $X^n$ takes its values 
is denoted by
$\scrX_n$ (\eg\ represented by its adjacency matrix
with entries $\{X_{ij}:1\leq i,j\le n\}$). 
The ($n$-dependent) probability of an edge occuring ($X_{ij}=1$)
between vertices $1\leq i,j\le n$ \emph{within the same class}
is denoted $p_n\in(0,1)$; the probability of an
edge \emph{between classes} is denoted $q_n\in(0,1)$,
\begin{equation}
  \label{eq:pbm}
  Q_{ij}(\theta):=P_{\theta,n}(X_{ij}=1)=\begin{cases}
    \,\,p_n,&\quad\text{if $\theta_{n,i}=\theta_{n,j}$,}\\
    \,\,q_n,&\quad\text{if $\theta_{n,i}\neq\theta_{n,j}$.}
  \end{cases}
\end{equation}
Note that if $p_n=q_n$, $X^n$ is the Erd\H os-R\'enyi graph
$G(n,p_n)$ and the class assignment $\theta_n\in\Theta_n$ is not
identifiable. 

The probability measure for the graph $X^n$ corresponding to
parameter \(\theta\) is denoted $P_{\theta}$. The likelihood is
given by,
\[
  p_{\theta}(X^n)=\prod_{i<j} Q_{i,j}(\theta)^{X_{ij}}
    (1-Q_{i,j}(\theta))^{1-X_{ij}}.
\]
For the \emph{sparse versions} of the planted bi-section model,
we also define edge probabilities that vanish with growing $n$:
take $(a_n)$ and $(b_n)$ such that $a_n\log n=np_n$ and
$b_n\log n=nq_n$ for the Chernoff-Hellinger phase; take $(c_n)$
and $(d_n)$ such that $c_n=np_n$ and $d_n=nq_n$ for the
Kesten-Stigum phase. The fact that we do not allow loops 
(edges
that connect vertices with themselves) leaves room for
$\frac12(n-1)n$
possible edges in the random graph $X^n$ observed at iteration
$n$.  

The statistical question of interest in this model is to
reconstruct the unobserved class assignment vectors $\theta_n$
\emph{consistently}, that is, (close to) correctly with
probability growing to one as $n\to\infty$. Consistency
can be stated in various ways, as defined below.
\begin{definition}
\label{def:exact}
For each $n\in\NN$, let $\theta_n\in\Theta_n$. An estimator sequence
$\hat{\theta}_n:\scrX_n\to\Theta_n$ is said to \emph{recover the
class assignment $\theta_n$ exactly} if,
\[
  P_{\theta_n}\bigl(\,\hat{\theta}_n(X^n)=\theta_n\,\bigr)\to1,
\]
as $n\to\infty$, that is, if $\hat{\theta}_n$ indicates the correct communities
with high probability. 
\end{definition}
We also relax this consistency requirement somewhat in the form
of the following definition, \cf\ \citep{Mossel16} and others: for
$n\geq1$ and two class assignments $\theta,\eta\in\Theta_n$,
let $k(\theta,\eta)=\#\set{i:\theta_i\neq \eta_i}=\sum_{i=1}^n|\theta_i-\eta_i|$.

Note that $\theta$ and $(1-\theta_1,\ldots,1-\theta_n)$ induce the same law, so that $\theta$ is close to $\eta$ when either $k(\theta,\eta)$ or $n-k(\theta,\eta)$ is small. This is reflected in the following definition.
\begin{definition}
\label{def:detect}
Let $\theta_n\in\Theta_n$ be given. An estimator sequence
$\hat{\theta}_n:\scrX_n\to\Theta_n$ is said to \emph{recover
$\tht_{0,n}$ almost-exactly}, 
if, for some sequence $\ell_n=o(n)$,
\[
  P_{\theta_{n}}\bigl(\,
    k(\hat{\theta}_{n},\theta_{n})\wedge(n-k(\hat{\theta}_{n},\theta_{n})) \le \ell_n
    \,\bigr)\to 1.
\]
We say that \emph{$\hat{\theta}_n$ recovers $\theta_{0,n}$
with error rate $\ell_n$}.
\end{definition}
Below, we specialize to the Bayesian approach:
we choose prior distributions $\pi_n$ for all $\Theta_n$, ($n\ge1$)
and calculate the posterior: denoting the likelihood
by $p_{\theta}(X^n)$, the posterior for a set $A\subset\Theta_n$ is given by
\[
  \Pi(A|X^n)={\displaystyle \sum_{\theta\in A}
    p_{\theta}(X^n)\, \pi_n(\theta)}
    \biggm/
  {\displaystyle \sum_{\theta\in\Theta_n}
    p_{\theta}(X^n)\, \pi_n(\theta)},
\]
where $\pi_n:\Theta_n\to[0,1]$ is the probability mass function for
the prior distribution $\Pi_n$ on $\Theta_n$. 


We make the following convenient assumption on the prior (which  always holds after removing parameters with zero prior mass from the parameter space). 
\begin{assumption}\label{eq:prior}
The prior mass function \(\pi_n\) of the prior \(\Pi_n\) on \(\Theta_n\) satisfies \(\pi_n(\theta)>0\), for all \(\theta\in\Theta_n\). 

\end{assumption}

The posterior distribution of a subset \(S\subseteq \Theta_n\) is given by
\[
\Pi_n(S\mid X^n)=\frac{\sum_{\theta\in S}\pi_n(\theta) p_{\theta}(X^n) }{\sum_{\theta\in \Theta_n} \pi_n(\theta)p_{\theta}(X^n) }.
\]

\begin{proposition}
	\label{prop:postconvset} For fixed \(n\), consider a prior probability mass function \(\pi_n\) on \(\Theta_n\) satisfying \cref{eq:prior}. 
	Suppose that for some \(\theta\in \Theta_n\), we observe a graph
	$X^n$ with $n$ vertices, distributed according to 
	$P_{\theta}.
	$
Let \(S\subseteq\Theta_n\weg\set{\theta}\) be non-empty.
For \(\eta\in S\), define \[
\begin{split}
D_{1}(\theta,\eta)&=\{(i,j)\in\{1,\ldots,n\}^2:\,i<j,\,
\theta_{i}=\theta_{j},\,
\eta_{i}\neq\eta_{j}\},\\
D_{2}(\theta,\eta)&=\{(i,j)\in\{1,\ldots,n\}^2:\,i<j,\,
\theta_{i}\neq\theta_{j},\,
\eta_{i}=\eta_{j}\}.
\end{split}
\]
 Then \(D_{1}(\theta,\eta)\) and \(D_{2}(\theta,\eta)\) are disjoint, and if 
 \[
 0<B\le \min_{\eta\in S}|D_{1}(\theta,\eta)\cup D_{2}(\theta,\eta)|,
 \]then  
\begin{equation}
	\label{eq:ordervstestpwr}
	P_{\theta}
	\Pi_n\bigl(S\bigm|X^n\bigr)
	\le \rho(p_n,q_n)^{B}\sum_{\eta\in S}\frac{\pi_n(\eta)^{1/2}}{\pi_n(\theta)^{1/2}},
	\end{equation}
\end{proposition}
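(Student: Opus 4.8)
The plan is to start from the exact posterior ratio and exploit that both the summand for $\eta$ and the summand for the truth $\theta$ occur in its normalizing denominator. First I would dispose of disjointness: membership in $D_1(\theta,\eta)$ forces $\theta_i=\theta_j$ whereas membership in $D_2(\theta,\eta)$ forces $\theta_i\neq\theta_j$, so no pair can lie in both, giving $|D_1\cup D_2|=|D_1|+|D_2|$. Note that $D_1\cup D_2$ is precisely the set of pairs $(i,j)$ on which the edge probabilities $Q_{ij}(\theta)$ and $Q_{ij}(\eta)$ disagree, while off this set they coincide.

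The crucial step, and the one that produces the square roots on the right-hand side, is a geometric-mean lower bound on the denominator. Since $\theta\in\Theta_n$ and every $\eta\in S\subseteq\Theta_n\weg\set{\theta}$, both $\pi_n(\theta)p_{\theta}(X^n)$ and $\pi_n(\eta)p_{\eta}(X^n)$ appear as nonnegative summands in $\sum_{\eta'\in\Theta_n}\pi_n(\eta')p_{\eta'}(X^n)$. Hence the denominator is at least each of them, and therefore at least their geometric mean $\bigl(\pi_n(\theta)p_{\theta}(X^n)\,\pi_n(\eta)p_{\eta}(X^n)\bigr)^{1/2}$. Dividing the $\eta$-summand of the numerator by this lower bound collapses it, and summing over $\eta\in S$ yields the pointwise bound
\[
\Pi_n\bigl(S\bigm|X^n\bigr)\le \sum_{\eta\in S}\frac{\pi_n(\eta)^{1/2}}{\pi_n(\theta)^{1/2}}\Bigl(\frac{p_{\eta}(X^n)}{p_{\theta}(X^n)}\Bigr)^{1/2}.
\]

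Next I would take $P_{\theta}$-expectations and pass them inside the finite sum, reducing everything to computing $P_{\theta}\bigl(p_{\eta}/p_{\theta}\bigr)^{1/2}=\int\sqrt{p_{\theta}p_{\eta}}$, the Hellinger affinity between $P_{\theta}$ and $P_{\eta}$. Because edges are independent, this affinity factorizes over pairs $i<j$, with per-pair factor $\sqrt{Q_{ij}(\theta)Q_{ij}(\eta)}+\sqrt{(1-Q_{ij}(\theta))(1-Q_{ij}(\eta))}$. This factor equals $1$ wherever $Q_{ij}(\theta)=Q_{ij}(\eta)$, i.e.\ off $D_1\cup D_2$, and equals $\rho(p_n,q_n)=\sqrt{p_nq_n}+\sqrt{(1-p_n)(1-q_n)}$ on every pair of $D_1\cup D_2$ (in both orderings of $p_n,q_n$, by symmetry). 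Thus $P_{\theta}\bigl(p_{\eta}/p_{\theta}\bigr)^{1/2}=\rho(p_n,q_n)^{|D_1(\theta,\eta)\cup D_2(\theta,\eta)|}$.

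Finally, since $\rho(p_n,q_n)\in(0,1]$ by Cauchy--Schwarz and $|D_1(\theta,\eta)\cup D_2(\theta,\eta)|\ge B$ for every $\eta\in S$, monotonicity of $t\mapsto\rho^{t}$ gives $\rho^{|D_1\cup D_2|}\le\rho^{B}$; collecting factors yields \eqref{eq:ordervstestpwr}. The only genuinely delicate point is the geometric-mean bound: a naive estimate retaining only $\pi_n(\theta)p_{\theta}(X^n)$ in the denominator would leave the full likelihood ratio $p_{\eta}/p_{\theta}$, whose $P_{\theta}$-expectation is exactly $1$ and would destroy the contraction. Splitting the denominator symmetrically between $\theta$ and $\eta$ is precisely what turns the ratio into a square root and brings in the affinity $\rho(p_n,q_n)$.
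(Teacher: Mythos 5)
Your proof is correct, but it takes a genuinely different route from the paper's. The paper argues through the testing framework: it invokes \cref{lem:posteriorconvergencekleijnlemma} (with $B_n=\{\theta\}$) to reduce the posterior bound to the existence of test functions, then uses \cref{lem:testingpower}, which constructs a likelihood-ratio test $\phi_\eta$ for each $\eta\in S$ whose combined error is bounded by $\pi_n(\theta)^{1/2}\pi_n(\eta)^{1/2}\rho(p_n,q_n)^{|D_1|+|D_2|}$ --- a bound obtained by writing the likelihood ratio in terms of the independent binomial counts $(S_n,T_n)$ on $D_1$ and $D_2$ and evaluating their moment-generating functions --- and finally aggregates via $\phi_S=\max_{\eta\in S}\phi_\eta$ and a union bound. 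You instead bound the posterior directly: lower-bounding the normalizing sum by the geometric mean
\[
\sum_{\eta'\in\Theta_n}\pi_n(\eta')p_{\eta'}(X^n)\ \ge\ \bigl(\pi_n(\theta)p_{\theta}(X^n)\,\pi_n(\eta)p_{\eta}(X^n)\bigr)^{1/2},
\]
which is legitimate since both terms appear in the (nonnegative) sum, and then computing $P_{\theta}(p_{\eta}/p_{\theta})^{1/2}$ as a product of per-edge Hellinger affinities, each factor being $1$ off $D_1\cup D_2$ and $\rho(p_n,q_n)$ on it. The two arguments are mathematically cognate --- the inequality $\min(a,b)\le\sqrt{ab}$ that powers your denominator bound is exactly what underlies Le Cam's bound on likelihood-ratio test error, and both proofs reduce to the same affinity computation $\rho^{|D_1|+|D_2|}$ --- but yours is shorter, fully self-contained (no appeal to external lemmas from \cite{Kleijn20}), and replaces the MGF calculation by a transparent factorization over edges. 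What the paper's route buys in exchange is modularity: the testing lemmas are stated for general sets $B_n$, $V_n$ and general tests, so they can be reused in settings where the convenient product structure or the explicit likelihood ratio is unavailable, and they make the link to the general theory of posterior convergence via tests explicit.
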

where $\rho(p,q)$ is the Hellinger-affinity between two Bernoulli-distributions with
parameters $p$ and $q$, which is given by
\[
\rho(p,q)=p^{1/2}q^{1/2}+(1-p)^{1/2}(1-q)^{1/2}.
\] 
The proof is deferred to  \cref{app:proofofposteriorconvergencegeneralcase}.

\section{Prior}\label{sec:priors}

We consider  hierarchical priors, conditionally defined by first putting a prior $\pi_n(m)$ on the size of the smallest class,  and conditionally on $m$ a uniform prior on $\Theta_{n,m}$. So
\begin{equation}\label{eq:normalformpriors} 
\begin{split}
	m \sim  & \pi_n(m),\quad m=1,\ldots,\floor{n/2}\\
	\theta\mid m \sim  & \pi_n(\theta\mid m)=\frac1{|\Theta_{n,m}|}.
\end{split}
\end{equation}

This class of priors includes several interesting examples. 

\begin{example}\label{ex:bernoulli}
For $r\in(0,1)$, we consider the prior on $\theta=(\theta_1,\ldots,\theta_n)\in\Theta_n$, defined by
\begin{align}
	\theta_i \stackrel{iid}\sim\, & \text{Bernoulli}(r), \quad i=1,\ldots,n,
\end{align}
and next set $\theta=(1-\theta_1,\ldots,1-\theta_n)$, when $\sum_{i=1}^n
\theta_i>n/2$ or when $\sum_{i=1}^n \theta_i=n/2$ and $\theta_1=1$. 
In this case, $\pi_n(m)=|\Theta_{n,m}|\rh{r^m(1-r)^{n-m}+r^{n-m}(1-r)^{m}}$. So $m$ has the same distribution as $Y\wedge (n-Y)$, where $Y$ is   binomially distributed with parameters $n$ and $r$.  
When $r=1/2$, then $\pi_n(\theta)=2^{1-n}$, for each $\theta\in\Theta_n$, what corresponds to the uniform prior on $\Theta_n$. 
\end{example}

\begin{example}\label{ex:beta}
Let $\alpha,\beta>0$, and  consider \begin{align*}
	r \sim &\, \text{beta}(\alpha,\beta)\\
	\theta_i\mid r \stackrel{iid}\sim\, & \text{Bernoulli}(r), \quad i=1,\ldots,n,
\end{align*}
and next set $\theta=(1-\theta_1,\ldots,1-\theta_n)$, when  $\theta\notin\Theta_n$.
In this case  \begin{align*}
	\pi(m)= & |\Theta_{n,m}| \int_0^1 \rh{r^m(1-r)^{n-m}+r^{n-m}(1-r)^{m}}\frac{r^{\alpha-1}(1-r)^{\beta-1}}{B(\alpha,\beta)}dr\\
	= & |\Theta_{n,m}| \frac{B(m+\alpha,n-m+\beta)+B(n-m+\alpha,m+\beta)}{B(\alpha,\beta)}.
	\end{align*}
	Note that $\alpha=\beta=1$, corresponds to the uniform prior on $r$. 
\end{example}

\begin{example}\label{ex:uniformonm}
	If we a-prior believe that every class size is equally likely, we could choose $m\sim \text{unif}\set{0,\ldots,\floor{n/2}}$, so $\pi(m)=\frac1{1+\floor{n/2}}$. 
\end{example}

\section{Posterior concentration at the parameter}\label{sec:PBMposterior}

 In this section we are interested whether the posterior concentrates its mass on the true parameter $\theta$, or in the very sparse case, in a small neighbourhood around $\theta$.

\subsection{Exact recovery}

First we study exact recovery for the examples in \cref{sec:priors}.
\newcommand{\boundexactallpriors}{ne^{-(2c-g)n/4}e^{ne^{-cn/2}}}  
\begin{theorem}\label{thm:posteriorintheparameter}
	Suppose $X^n$ is generated according to $\theta\in\Theta_n$. For the prior defined in \cref{ex:bernoulli} with $r=1/2$ (i.e. the uniform prior on $\Theta_n$), we have when 	
	$-\log \rho(p_n,q_n)\ge \frac{\alpha_n\log n}{n}$ for some sequence $\alpha_n$, then \[P_{\theta_{0,n}}\Pi_n(\Theta_{n}\weg \set{\theta_{0,n}}\mid X^n) \le 2n^{1-\alpha_n/2}e^{n^{1-\alpha_n/2}} .\] 
	Hence posterior convergence is achieved when \[
	(\alpha_n-2)\log n\to\infty. 
	\]
	When $p_n=\frac{a_n\log n}n$ and $q_n=\frac{b_n\log n}n$, then \[
	\rh{\rh{\sqrt{a_n}-\sqrt{b_n}}^2-4-\frac1{2n}a_nb_n\log n}\log n\to \infty,
	\]
	is a sufficient condition for posterior convergence.

	In the dense phase, let $c\ge -\log \rho(p_n,q_n)$,  and  $g\ge 0$ a constant, so that $g\ge \log\rh{\frac r{1-r}\bigvee \frac{1-r}r}$ in \cref{ex:bernoulli}, $g=2+2\log 2$ in \cref{ex:beta}, and $g=1+\log 2$ in \cref{ex:uniformonm}. Then, for each of the three cases,
\begin{align*}
 		 P_{\theta_{0,n}}\Pi_n(\Theta_{n}\weg \set{\theta_{0,n}}\mid X^n) \le &\,2\sqrt2\boundexactallpriors.
 \end{align*}

%
%
\end{theorem}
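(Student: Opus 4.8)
The plan is to apply Proposition~\ref{prop:postconvset} with $S=\Theta_n\weg\set{\theta_{0,n}}$ and to control the two ingredients of the bound~\cref{eq:ordervstestpwr} separately: the geometric factor $\rho(p_n,q_n)^B$ and the prior-ratio sum $\sum_{\eta\in S}(\pi_n(\eta)/\pi_n(\theta_{0,n}))^{1/2}$. The key combinatorial quantity is $B$, a lower bound on $|D_1(\theta_{0,n},\eta)\cup D_2(\theta_{0,n},\eta)|$ valid for \emph{all} $\eta\neq\theta_{0,n}$. First I would establish that for any $\eta$ differing from $\theta_{0,n}$ in $k=k(\theta_{0,n},\eta)$ coordinates (with $1\le k\le n$), the number of disagreeing pairs $|D_1\cup D_2|$ is exactly $k(n-k)$: a pair $(i,j)$ lies in $D_1\cup D_2$ precisely when $\theta_{0,n}$ and $\eta$ induce different same-class/different-class verdicts on $\{i,j\}$, which happens iff exactly one of $i,j$ is a flipped vertex. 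Since $k(n-k)$ is minimized over $k\in\{1,\ldots,n-1\}$ at the endpoints, giving $n-1$, and since identifiability rules out $k=n$ (the global flip), the uniform lower bound is $B=n-1$.

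Next I would bound the prior-ratio sum. For the uniform prior ($r=1/2$) every $\pi_n(\eta)=2^{1-n}$, so the sum is simply $|S|=2^{n-1}-1\le 2^{n-1}$, and $\sum_{\eta\in S}(\pi_n(\eta)/\pi_n(\theta_{0,n}))^{1/2}\le 2^{n-1}$. For the general (non-uniform) priors of Examples~\ref{ex:bernoulli}--\ref{ex:uniformonm}, the plan is to show that the prior ratio is controlled by the constant $g$: specifically, that $\pi_n(\eta)/\pi_n(\theta_{0,n})\le e^{g\,k(\theta_{0,n},\eta)}$ or a comparable per-coordinate bound, so that the choice of $g$ stated in the theorem makes the prior contribution absorbable. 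The definition of $g$ as (an upper bound on) $\log(\frac r{1-r}\vee\frac{1-r}r)$ in the Bernoulli case is exactly the worst-case log-ratio incurred by flipping a single coordinate under the $\mathrm{Bernoulli}(r)$ prior, and the analogous constants for the beta and uniform-on-$m$ priors come from bounding the ratios of the $\pi_n(m)$ expressions computed in Section~\ref{sec:priors}.

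With $B=n-1$ and $-\log\rho(p_n,q_n)\le c$, I would then combine: $\rho(p_n,q_n)^{B}\le e^{-c(n-1)}\le e^{-cn/2}\cdot e^{-c n/2+c}$, and organize the prior sum and the affinity factor to match the target form. The uniform-prior display follows by substituting $-\log\rho\ge \alpha_n\log n/n$, giving $\rho^{n-1}\le n^{-\alpha_n(n-1)/n}\le n^{1-\alpha_n}\cdot n^{-\alpha_n/2+\ldots}$ and pairing with the $2^{n-1}$ prior factor; the asymptotic sufficient conditions then reduce to elementary estimates, the Chernoff--Hellinger condition arising from the standard lower bound $-\log\rho(p_n,q_n)\ge \thalf(\sqrt{p_n}-\sqrt{q_n})^2$ together with a correction term of order $p_nq_n$. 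For the dense-phase display I would package the prior sum (which grows like $e^{gn}$ up to constants) against $\rho^{B}\le e^{-c(n-1)}$ and an additional $e^{ne^{-cn/2}}$-type factor that I expect to arise from converting the crude sum $\sum_{\eta\in S}$ into the sharper form, via a lemma counting parameters at each Hamming distance and summing a binomial-type series. The main obstacle will be precisely this last step: obtaining the factor $e^{ne^{-cn/2}}$ requires summing $\sum_k\binom nk \rho^{k(n-k)}e^{gk}$ and showing the series is dominated by its small-$k$ tail, where the competition between the combinatorial growth $\binom nk e^{gk}$ and the rapidly decaying $\rho^{k(n-k)}$ must be controlled uniformly in $k$; the stated constants for $g$ are exactly what is needed to keep $\binom nk e^{gk}\rho^{k(n-k)}$ summable to the claimed exponential-of-exponential bound.
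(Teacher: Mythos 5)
There is a genuine gap, and it is fatal to the route you commit to in your first and third paragraphs. Applying \cref{prop:postconvset} \emph{once} to $S=\Theta_n\weg\set{\theta_{0,n}}$ with the uniform lower bound $B=n-1$ forces you to pay the full cardinality of $S$ in the prior-ratio sum: for the uniform prior this gives the bound $2^{n-1}\rho(p_n,q_n)^{n-1}$. In the sparse regime the theorem is aimed at, the hypothesis $-\log\rho(p_n,q_n)\ge\alpha_n\log n/n$ holds with $\alpha_n$ bounded (e.g.\ $\alpha_n\to2$ slowly from above, so that $(\alpha_n-2)\log n\to\infty$), and then $\rho(p_n,q_n)^{n-1}$ can be as large as $n^{-\alpha_n(n-1)/n}$, i.e.\ only \emph{polynomially} small, while $2^{n-1}$ is exponentially large; the product diverges and your bound is vacuous, whereas the theorem claims a bound tending to zero. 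The point you discard is that although $\min_k k(n-k)=n-1$ is attained at $k=1$ and $k=n-1$, the overwhelming majority of the $2^{n-1}-1$ alternatives sit at Hamming distance $k\asymp n/2$, where the exponent is $k(n-k)\asymp n^2/4$; taking the worst-case $B$ for all of them simultaneously is exactly what loses the theorem.

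The fix --- and this is what the paper does --- is to stratify: partition $\Theta_n\weg\set{\theta_{0,n}}$ into Hamming shells $V_{n,k}=\set{\eta:k(\theta_{0,n},\eta)=k}$, apply \cref{prop:postconvset} \emph{separately} to each shell with the exact exponent $B=k(n-k)$ (valid for every $\eta$ in the shell, by your own computation), and sum: $P_{\theta}\Pi_n(\Theta_n\weg\set{\theta}\mid X^n)\le\sum_{k=1}^{n-1}\binom nk\rho^{k(n-k)}\sup_{\eta\in V_{n,k}}\sqrt{\pi_n(\eta)/\pi_n(\theta)}$. Since $\rho^{k(n-k)}\le(\rho^{n/2})^k$ for $k\le n/2$ and the summand is symmetric under $k\mapsto n-k$, the series is bounded by $2\bigl((1+\rho^{n/2})^n-1\bigr)\le2n\rho^{n/2}e^{n\rho^{n/2}}$ (the paper's \cref{lem:boundforbinomialsum}): each flipped vertex buys a factor $\rho^{n/2}\le n^{-\alpha_n/2}$, which beats the per-vertex entropy precisely when $\alpha_n>2$ in the stated sense --- the decisive quantity is $\rho^{n/2}$ per flip, not $\rho^{n-1}$ in total. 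Your final paragraph does write down the correct series $\sum_k\binom nk\rho^{k(n-k)}e^{gk}$, but you present it only as an anticipated ``obstacle'' for the dense-phase display, and it is inconsistent with your announced single-application plan: the $k$-dependent exponents are only available if you invoke the proposition shell by shell (or reprove it with $\eta$-dependent exponents). Separately, your per-coordinate prior bound should be indexed by $|m_\theta-m_\eta|\le\min(k,n-k)\le n/2$ rather than by $k$ itself, which is how the paper arrives at the uniform bounds $2e^{gn/2}$, $(2e)^n$ and $(2e)^{n/2}$ for the three examples; this is minor compared with the stratification issue.
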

The proof is deferred to \cref{app:proofpostconvergenceinoneparameter}.

In the setting of \cite{Mossel16}, where $C^{-1}<a_n,b_n<C$, for some constant $C>1$, the sufficient conditions for posterior convergence of the uniform prior translates to 
\begin{align*}
\rh{a_n+b_n-2\sqrt{a_nb_n}-4}\log n\to\infty,
\end{align*}
which implies \cref{eq:mnscritical}. Our conditions are slightly stronger than their condition, however we allow all  $ a_n,b_n$  for which $p_n$ and $q_n$ are  probabilities, and in our setting the size of the true class can be everything between zero and $n$. Mossel e.a. assume that $n$ is even and that both classes have class size exactly $n/2$, which is limited and unrealistic.

\subsection{Almost-exact recovery}

Note that $\theta$ and $(1-\theta_1,\ldots,1-\theta_n)$ induce the same likelihood. Hence elements $\eta$ with $k(\theta,\eta)=n-k$ are `equally close' to $\theta$ as elements $\eta$ with $k(\theta,\eta)=k$. Let \begin{equation}\label{eq:definitionBkn}
B_{k_n}=\set{\eta\in\Theta_n:k(\eta,\theta)\wedge(n-k(\theta,\eta))<   k_n}.
\end{equation}

\newcommand{\ratealmostexactintermsofrho}{e^{-\alpha_nn\rh{ \log \alpha_n +\beta_n/2 - 1 -g/\alpha_n}/4}}
\newcommand{\ratealmostexactintermsofcnanddn}{\exp\rh{-\frac14\alpha_ n n\rh{\log\alpha_n + \frac14\rh{\sqrt{c_n}-\sqrt{d_n}}^2 - \frac1{8n}c_nd_n-1- g/\alpha_n}}}
\begin{theorem}\label{lem:weakconvergenceuniformprior}
Let $k_n\ge \alpha_nn$,  $-\log\rho(p_n,q_n)\ge \frac{\beta_n}n$, and let $g\ge 0$ be a constant, so that $g\ge \log\rh{\frac r{1-r}\bigvee \frac{1-r}r}$ in \cref{ex:bernoulli}, $g=2+2\log 2$ in \cref{ex:beta}, and $g=1+\log 2$ in \cref{ex:uniformonm}. Then
 \begin{align*}
 		 P_{\theta_{0,n}}\Pi_n(\Theta_{n}\weg B_{k_n}\mid X^n)  \le &\,2\sqrt2 \ratealmostexactintermsofrho.
 \end{align*}
If, instead of the condition on $\rho(p_n,q_n)$,  $p_n=\frac{c_n}n$ and $q_n=\frac{d_n}n$, then 
 \begin{align*}
 		 &P_{\theta_{0,n}}\Pi_n(\Theta_{n}\weg B_{k_n}\mid X^n)  \\
 		 \le &\,2\sqrt2 \ratealmostexactintermsofcnanddn.
 \end{align*}
\end{theorem}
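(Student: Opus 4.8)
The plan is to apply \cref{prop:postconvset} to $S=\Theta_{n}\weg B_{k_n}$, after first converting the abstract disagreement count $|D_{1}(\theta,\eta)\cup D_{2}(\theta,\eta)|$ into an explicit function of the Hamming distance $k(\theta,\eta)$. Writing $n_{ab}=\#\set{i:\theta_{i}=a,\,\eta_{i}=b}$ for $a,b\in\set{0,1}$, a pair $(i,j)$ lies in $D_{1}$ exactly when it is monochromatic under $\theta$ but bichromatic under $\eta$, and in $D_{2}$ in the mirror situation. Counting these through the four blocks gives $|D_{1}|=n_{00}n_{01}+n_{10}n_{11}$ and $|D_{2}|=n_{00}n_{10}+n_{01}n_{11}$, and since $D_{1},D_{2}$ are disjoint, $|D_{1}\cup D_{2}|=(n_{00}+n_{11})(n_{01}+n_{10})=k(\theta,\eta)\,(n-k(\theta,\eta))$, using $n_{01}+n_{10}=k(\theta,\eta)$ and $n_{00}+n_{11}=n-k(\theta,\eta)$. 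This identity is the crux of the argument: it reduces the geometry of the block model to the elementary map $k\mapsto k(n-k)$.

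Next I would stratify the complement by Hamming distance. By definition $\eta\in\Theta_{n}\weg B_{k_n}$ iff $k_n\le k(\theta,\eta)\le n-k_n$, so $\Theta_{n}\weg B_{k_n}=\bigcup_{k=k_n}^{n-k_n}S_{k}$ with $S_{k}=\set{\eta:k(\theta,\eta)=k}$. On the shell $S_{k}$ every $\eta$ has $|D_{1}\cup D_{2}|=k(n-k)$, so \cref{prop:postconvset} applies with $B=k(n-k)$ and gives $P_{\theta}\Pi_{n}(S_{k}\mid X^{n})\le\rho(p_n,q_n)^{k(n-k)}\sum_{\eta\in S_{k}}\pi_{n}(\eta)^{1/2}/\pi_{n}(\theta)^{1/2}$. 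Summing over shells, it remains to control three factors: the cardinalities $|S_{k}|\le\binom nk$; the prior ratios, which for each of the priors of \cref{ex:bernoulli,ex:beta,ex:uniformonm} can be bounded through the normal form \cref{eq:normalformpriors} by $\pi_{n}(\eta)/\pi_{n}(\theta)\le 2e^{gn}$ with the stated $g$ (the factor $2$ coming from the flip identification, and $g$ dominating the per-coordinate log prior ratio), so that $\pi_{n}(\eta)^{1/2}/\pi_{n}(\theta)^{1/2}\le\sqrt2\,e^{gn/2}$; and the Hellinger powers.

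The remaining work is analytic. Using $-\log\rho(p_n,q_n)\ge\beta_n/n$ and the elementary bound $k(n-k)\ge\tfrac n2\,(k\wedge(n-k))\ge\tfrac12\alpha_n n^{2}$ on the admissible range, together with Stirling in the form $\binom nk\le e^{nH(k/n)}$ and $H(\alpha)\le\alpha(1-\log\alpha)$, I would show that the shell summand is a decreasing function of $k$ on $[k_n,n-k_n]$ under the stated hypotheses, so the series $\sum_{k=k_n}^{n-k_n}$ is controlled by a fixed multiple of its leading term at $k=k_n=\alpha_n n$. A careful accounting of the three factors at $k=k_n$, optimized so as to keep the series summable, then produces the exponent $-\tfrac{\alpha_n n}{4}\bigl(\log\alpha_n+\tfrac{\beta_n}{2}-1-\tfrac{g}{\alpha_n}\bigr)$ together with the prefactor $2\sqrt2$, which is the first displayed bound.

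For the Kesten--Stigum statement I would not repeat the argument but reduce it to the first part. The only new ingredient is a nonasymptotic lower bound on the Hellinger exponent: expanding $\rho$ for small arguments one checks $-\log\rho(c_n/n,d_n/n)\ge\tfrac1n\bigl(\tfrac12(\sqrt{c_n}-\sqrt{d_n})^{2}-\tfrac{c_nd_n}{4n}\bigr)$, whence the first bound applies with $\beta_n=\tfrac12(\sqrt{c_n}-\sqrt{d_n})^{2}-\tfrac{c_nd_n}{4n}$, reproducing exactly the second displayed exponent. The main obstacle is not the overall structure, which is dictated by the identity $|D_{1}\cup D_{2}|=k(n-k)$, but the uniform bookkeeping: one must verify the claimed value of $g$ separately for each of the three priors, confirm the monotonicity of the shell summand so that the series genuinely collapses to its first term, and make the Hellinger expansion rigorous as an inequality valid for every $n$ rather than merely asymptotically.
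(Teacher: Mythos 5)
Your proposal follows the paper's own route almost step for step: the identity $|D_1(\theta,\eta)\cup D_2(\theta,\eta)|=k(\theta,\eta)\,(n-k(\theta,\eta))$, the decomposition of $\Theta_n\weg B_{k_n}$ into Hamming shells, the shell-by-shell application of \cref{prop:postconvset} with the cardinality bound $\binom nk$ and a uniform prior-ratio bound, and the reduction of the Kesten--Stigum case to the first bound via a Hellinger expansion (your choice $\beta_n=\frac12(\sqrt{c_n}-\sqrt{d_n})^2-\frac{c_nd_n}{4n}$ does reproduce the second exponent exactly, and is a tidier packaging than the paper's repetition of the computation). However, two steps in your sketch do not deliver the stated bound. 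First, the prior-ratio bookkeeping: from $\pi_n(\eta)/\pi_n(\theta)\le 2e^{gn}$ you get $\pi_n(\eta)^{1/2}/\pi_n(\theta)^{1/2}\le\sqrt2\,e^{gn/2}=\sqrt2\exp\rh{\frac{\alpha_nn}{4}\cdot\frac{2g}{\alpha_n}}$, so your final exponent would contain $2g/\alpha_n$ where the theorem has $g/\alpha_n$; with the stated values of $g$ this is a strictly weaker inequality. What the theorem needs is $\pi_n(\eta)/\pi_n(\theta)\le 2e^{gn/2}$, which holds because both class sizes lie in $\set{0,\dots,\floor{n/2}}$, so $|m_\theta-m_\eta|\le n/2$ in \cref{ex:bernoulli} (via \cref{lem:upperboundforr}), while in \cref{ex:beta,ex:uniformonm} the relevant bounds are $(2e)^{n}=e^{gn/2}$ with $g=2+2\log2$ (\cref{lem:boundonthefractionofthebetas}) and $(2e)^{n/2}=e^{gn/2}$ with $g=1+\log2$; your ``per-coordinate over range $n$'' heuristic is exactly the lossy step.

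Second, the analytic core is wrong as described. The dominating shell term $\binom nk\rho(p_n,q_n)^{k(n-k)}$ is symmetric about $k=n/2$ (both $\binom nk$ and $k(n-k)$ are), so it cannot be decreasing on all of $[k_n,n-k_n]$; and even after symmetrizing to $[k_n,\floor{n/2}]$, monotonicity alone only bounds the sum by (number of terms)$\times$(leading term), a factor of order $n$ rather than a fixed multiple. The mechanism that actually collapses the sum is geometric domination: symmetrize (a factor $2$), use $k(n-k)\ge kn/2$ and $\binom nk\le(en/k)^k\le(e/\alpha_n)^k$ to dominate by the geometric series $\sum_{k\ge k_n}y^k=y^{k_n}/(1-y)$ with $y=\frac e{\alpha_n}\rho(p_n,q_n)^{n/2}$, and then control $y^{\alpha_n n}/(1-y)$ with the inequality $e^{-Cx}/(1-e^{-x})\le e^{-Cx/4}$ of \cref{lem:inequalityofanepowerdividedbyoneminusanepower}. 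That inequality is the sole source of the factor $1/4$ in the stated exponents; nothing in your sketch produces it, and it cannot be waved in by ``optimizing for summability,'' since the denominator $1-y$ is not bounded away from zero under the hypotheses and must be paid for in the exponent. Both gaps are repairable (replace $e^{gn}$ by $e^{gn/2}$, and replace monotonicity by the geometric comparison plus the lemma), but as written the proposal establishes a strictly weaker inequality than the theorem claims.
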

The proof is deferred to \cref{app:proofofKerstenStigumcase}.

Almost exact recovery is established when $P_{\theta_{0,n}}\Pi_n(\Theta_{n}\weg B_{k_n}\mid X^n) $ converges to zero, while $\alpha_n\downarrow0$, however slowly. Hence, in all our examples, almost exact recovery is established when $(\sqrt{c_n}-\sqrt{d_n})^2\to\infty $, however slowly. In \cref{app:proofofalmostexactcreterium} we show that this condition is equivalent to the necessary and sufficient condition of \cref{eq:MNSdetect}, which shows that our results are sharp. 

Note that we have the fastest convergence for the uniform prior on $\Theta_n$ (which corresponds to \cref{ex:bernoulli} with $r=1/2$), as we can choose $g=0$ in this case. When  $r\neq 1/2$, $g>0$.  

\section{Uncertainty quantification}
\label{sec:pbmuncertainty}

Conditionally on an observation \(X^n\), a credible set of credible level \(1-\gamma_n\) is a measurable subset \(D_n(X^n)\) of the parameter set with posterior mass at least \(1-\gamma_n\):
\[
\Pi_n(D_n(X^n)\mid X^n)\ge 1-\gamma_n.
\]
In our (discrete, finite) setting any set-valued map \(x^n\mapsto B_n(x^n)\subseteq\Theta_n\), the corresponding map \(x^n\mapsto \Pi_n(B_n(x^n)\mid x^n)\) is measurable and positive, and hence the integral  \(P_{\theta_0}\Pi_n(B_n(X^n)\mid X^n)\) is well-defined, see \cref{app:defs} for details. From this perspective, a credible set (of credible level $1-\gamma_n$) is a set-valued map \(x^n\mapsto D_n(x^n) \) satisfying \(\Pi_n(D_n(x^n)\mid x^n)\ge 1-\gamma_n,\) for every \(x^n\in\scrX_n\). 
In nonparametric setting, credible sets can have bad coverage: \cite{freedman1999} provides us with examples. However in this section we 
show that in the case of exact recovery credible sets cover the true parameter with high probability.
In case of almost exact recovery we make the credible sets larger in order to guarantee asymptotic coverage, using ideas of \cite{Kleijn20}.

\subsection{Confidence level of credible sets}

In the particular case of exact recovery, using the specific discrete nature of our model, we can lower bound the confidence level of the credible set.

\begin{lemma}\label{lem:crediblesettoconfidencesets}
	Suppose \(P_{\theta}\Pi_n(\set{\theta}\mid X^n)\ge 1-x_n\), where \(0<x_n<1\). Let \(\gamma_n\in(0,1)\) and \(D_n(X^n)\) a \( 1-\gamma_n\) credible set, i.e. \(\Pi_n(D_n(X^n)\mid X^n)\ge 1-\gamma_n\). Then \[P_{\theta}(\theta\in D_n(X^n))\ge  1-\frac1{1-\gamma_n}x_n.\] 
\end{lemma}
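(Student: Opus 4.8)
The plan is to reduce the claim to a single application of Markov's inequality applied to the random variable $Z := 1 - \Pi_n(\set{\theta}\mid X^n)$. First I would rewrite the conclusion in terms of the complementary ``miss'' event $A := \set{x^n\in\scrX_n : \theta\notin D_n(x^n)}$; since $P_\theta(\theta\in D_n(X^n)) = 1 - P_\theta(A)$, it suffices to prove that $P_\theta(A)\le \frac1{1-\gamma_n}x_n$. The measurability and positivity needed to make $P_\theta(A)$ and the expectation $P_\theta Z$ well-defined are automatic in the present finite, discrete setting, as already noted at the start of \cref{sec:pbmuncertainty}.

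The key step is a pointwise, deterministic observation on the event $A$. Fix $x^n\in A$, so that $\theta\notin D_n(x^n)$ and hence $D_n(x^n)\subseteq\Theta_n\weg\set{\theta}$. Monotonicity of the posterior then gives $\Pi_n(D_n(x^n)\mid x^n)\le \Pi_n(\Theta_n\weg\set{\theta}\mid x^n) = 1 - \Pi_n(\set{\theta}\mid x^n)$. Combining this with the credible-set property $\Pi_n(D_n(x^n)\mid x^n)\ge 1-\gamma_n$ yields $1 - \Pi_n(\set{\theta}\mid x^n)\ge 1-\gamma_n$, that is, $Z(x^n)\ge 1-\gamma_n$ for every $x^n\in A$. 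In words: whenever the credible set fails to contain $\theta$, the posterior mass it assigns to the singleton $\set{\theta}$ must be at most $\gamma_n$.

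Finally I would feed this into Markov's inequality. The hypothesis $P_\theta\Pi_n(\set{\theta}\mid X^n)\ge 1-x_n$ is exactly the statement that $P_\theta Z\le x_n$, and $Z\ge 0$. Since $A\subseteq\set{Z\ge 1-\gamma_n}$ by the previous paragraph, Markov's inequality gives $P_\theta(A)\le P_\theta(Z\ge 1-\gamma_n)\le (1-\gamma_n)^{-1}P_\theta Z\le (1-\gamma_n)^{-1}x_n$, and taking complements finishes the proof. There is no real analytic obstacle here: the entire content is the inclusion $A\subseteq\set{Z\ge 1-\gamma_n}$, and the only point demanding a little care is ensuring the threshold $1-\gamma_n$ is strictly positive so that the division in Markov's inequality is legitimate, which holds because $\gamma_n\in(0,1)$.
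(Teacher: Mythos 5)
Your proof is correct and follows essentially the same route as the paper's: both rest on the pointwise observation that whenever $\theta\notin D_n(X^n)$ the credible set lies in $\Theta_n\weg\set{\theta}$, forcing $\Pi_n(\set{\theta}\mid X^n)\le\gamma_n$, combined with a Markov-type bound coming from $P_{\theta}\Pi_n(\set{\theta}\mid X^n)\ge 1-x_n$. The only difference is cosmetic: you apply Markov's inequality directly to $Z=1-\Pi_n(\set{\theta}\mid X^n)$ at the threshold $1-\gamma_n$, using non-strict inequalities, whereas the paper routes the same estimate through its auxiliary \cref{lem1} at levels $r>\gamma_n$ and then lets $r\downarrow\gamma_n$; your version neatly avoids that limiting step.
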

The proof is deferred to \cref{app:confidencesets}.
It turns out that the confidence level mostly depends on the rate of convergence and only weekly on the credible level.

As a corollary to \cref{thm:posteriorintheparameter,lem:crediblesettoconfidencesets} we have 
\begin{corollary}\label{cor:confidenceofcrediblesets}
	Suppose $X^n$ is generated according to $\theta\in\Theta_n$. 
	Let $D_n$ be a $1-\gamma_n$ credible set. 
	For the prior in \cref{ex:bernoulli} with $r=1/2$, we have when 	
	$-\log \rho(p_n,q_n)\ge \frac{\alpha_n\log n}{n}$, for some sequence $\alpha_n$, then \[P_{\theta}(\theta\in D_n)\ge 1- \frac2{1-\gamma_n} n^{1-\alpha_n/2}e^{n^{1-\alpha_n/2}} .\] 
	In the dense phase, when  $c\ge -\log \rho(p_n,q_n)$, let $g\ge 0$ be a constant, so that $g\ge \log\rh{\frac r{1-r}\bigvee \frac{1-r}r}$ in \cref{ex:bernoulli}, $g=2+2\log 2$ in \cref{ex:beta}, and $g=1+\log 2$ in \cref{ex:uniformonm}. Then 
\begin{align*}
&		 P_{\theta}(\theta\in D_n)\ge 1-\frac{2\sqrt2}{1-\gamma_n}\boundexactallpriors.
\end{align*} 
\end{corollary}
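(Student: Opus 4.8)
The plan is to obtain the corollary as a direct consequence of \cref{thm:posteriorintheparameter,lem:crediblesettoconfidencesets}: the theorem bounds the expected posterior mass away from the truth, and the lemma converts any such bound into a coverage statement for an arbitrary credible set. The only bridge required is the elementary observation that, since $\Pi_n(\set{\theta}\mid X^n)+\Pi_n(\Theta_n\weg\set{\theta}\mid X^n)=1$ pointwise, taking $P_\theta$-expectations yields
\[
P_\theta\Pi_n(\set{\theta}\mid X^n)=1-P_\theta\Pi_n(\Theta_n\weg\set{\theta}\mid X^n).
\]

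First I would read off the upper bound for $P_\theta\Pi_n(\Theta_n\weg\set{\theta}\mid X^n)$ supplied by \cref{thm:posteriorintheparameter} in each regime and call it $x_n$. In the sparse regime with $r=1/2$ and $-\log\rho(p_n,q_n)\ge\alpha_n\log n/n$ this is $x_n=2n^{1-\alpha_n/2}e^{n^{1-\alpha_n/2}}$, while in the dense regime it is $x_n=2\sqrt2\,\boundexactallpriors$ under the stated choices of $g$. By the identity above these give $P_\theta\Pi_n(\set{\theta}\mid X^n)\ge 1-x_n$. Feeding this into \cref{lem:crediblesettoconfidencesets} immediately produces $P_\theta(\theta\in D_n)\ge 1-\frac1{1-\gamma_n}x_n$, which is exactly the asserted inequality once the relevant $x_n$ is substituted in each of the two displays.

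The only step needing a word of care is the hypothesis $0<x_n<1$ in \cref{lem:crediblesettoconfidencesets}, since the theorem's bound need not be below $1$ for small $n$. This is not a genuine obstacle: whenever $x_n\ge1$ the claimed lower bound $1-\frac1{1-\gamma_n}x_n$ is non-positive, so the inequality holds trivially because probabilities are nonnegative. Hence I would invoke the lemma when $x_n<1$ and note that the conclusion is vacuously true otherwise, completing the argument with no additional computation.
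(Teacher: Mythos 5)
Your proposal is correct and follows exactly the route the paper intends: it combines \cref{thm:posteriorintheparameter} with \cref{lem:crediblesettoconfidencesets}, using the complementation identity $P_\theta\Pi_n(\set{\theta}\mid X^n)=1-P_\theta\Pi_n(\Theta_n\weg\set{\theta}\mid X^n)$ as the bridge and substituting the theorem's bounds as $x_n$. Your additional remark that the case $x_n\ge 1$ is vacuous (so the hypothesis $0<x_n<1$ of the lemma costs nothing) is a small point of care the paper leaves implicit, but it does not change the argument.
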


\subsection{Enlarged credible sets}

In the case of almost exact convergence, credible sets need to be enlarged, in order to make them asymptotic confidence sets. 

Let \(D_n(X^n)\) be a credible set. For a  nonnegative integer \(k_n\), we define the \(k_n\)-enlargement of \(D_n(X^n)\) to be the set  \[
C_n(X^n)=\set{\theta_n\in\Theta_n:\exists \eta_n\in D_n(X^n), m_n(\theta_n,\eta_n)< k_n}.
\]

Recall the definition of \(B_{k_n}(\theta)\) in \cref{eq:definitionBkn}, 
\[
B_{k_n}(\theta)=\set{\eta\in\Theta_n:k(\eta,\theta)\wedge(n-k(\theta,\eta))<   k_n}.
\]

We have the following result

\begin{lemma}\label{lem:coverageofenlargedcrediblesets}
	Suppose \(P_{\theta}\Pi_n(B_{n,k_n}(\theta)\mid X^n)\ge1-x_n\), \(0<x_n<1\). Let \(\gamma_n\in(0,1)\) and \(D_n(X^n)\) a \(1-\gamma_n\)-credible set, with \(k_n\)-enlargement \(C_n(X^n)\), then \[P_{\theta}(\theta\in C_n(X^n))\ge 1-\frac1{1-\gamma_n}x_n.\]
\end{lemma}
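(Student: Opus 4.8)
The plan is to mirror the proof of \cref{lem:crediblesettoconfidencesets}, replacing the single point $\set{\theta}$ by the ball $B_{k_n}(\theta)$ and the credible set $D_n$ by its enlargement $C_n$. The entire argument rests on one set-theoretic containment followed by a single application of Markov's inequality, so essentially no computation is required.

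First I would unwind the definition of the enlargement to obtain a containment valid pointwise in $x^n$. By definition $C_n(x^n)=\set{\psi\in\Theta_n:\exists\eta\in D_n(x^n),\ m_n(\psi,\eta)<k_n}$, so $\theta\notin C_n(x^n)$ means precisely that $m_n(\theta,\eta)\ge k_n$ for every $\eta\in D_n(x^n)$. Since $m_n(\theta,\eta)=k(\theta,\eta)\wedge(n-k(\theta,\eta))$ is symmetric, this says exactly that $D_n(x^n)$ contains no element of $B_{k_n}(\theta)$, i.e. $D_n(x^n)\subseteq\Theta_n\weg B_{k_n}(\theta)$ whenever $\theta\notin C_n(x^n)$. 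Monotonicity of the posterior together with the credible-level bound $\Pi_n(D_n(x^n)\mid x^n)\ge 1-\gamma_n$ then gives, on the event $\set{\theta\notin C_n(X^n)}$,
\[
\Pi_n(\Theta_n\weg B_{k_n}(\theta)\mid X^n)\ge\Pi_n(D_n(X^n)\mid X^n)\ge 1-\gamma_n.
\]

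Next I would convert this into a probability bound. Writing $Z=\Pi_n(\Theta_n\weg B_{k_n}(\theta)\mid X^n)$, the hypothesis $P_\theta\Pi_n(B_{k_n}(\theta)\mid X^n)\ge 1-x_n$ is equivalent to $P_\theta Z\le x_n$, and the displayed inequality shows $\set{\theta\notin C_n(X^n)}\subseteq\set{Z\ge 1-\gamma_n}$. Markov's inequality applied to the nonnegative random variable $Z$ therefore yields
\[
P_\theta(\theta\notin C_n(X^n))\le P_\theta(Z\ge 1-\gamma_n)\le\frac{P_\theta Z}{1-\gamma_n}\le\frac{x_n}{1-\gamma_n},
\]
and passing to complements gives the claimed bound $P_\theta(\theta\in C_n(X^n))\ge 1-\frac1{1-\gamma_n}x_n$.

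The argument is essentially mechanical; the only point requiring care is the set-theoretic step, where one must verify that the relation $m_n$ defining the enlargement $C_n$ is the same symmetric distance that defines the ball $B_{k_n}$, so that the statement ``$\theta$ is $k_n$-far from every element of $D_n$'' is genuinely equivalent to ``$D_n$ avoids $B_{k_n}(\theta)$''. Once this identification is in place the rest is immediate. Measurability of the event $\set{\theta\notin C_n(X^n)}$ and of $Z$ holds automatically in this finite, discrete setting (\cref{app:defs}), so Markov's inequality applies without further justification.
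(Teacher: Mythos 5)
Your proof is correct and takes essentially the same route as the paper: both arguments rest on the set-theoretic observation that $\theta\notin C_n(X^n)$ forces $D_n(X^n)$ and $B_{k_n}(\theta)$ to be disjoint, combined with a Markov-type bound on the posterior mass of $B_{k_n}(\theta)$ (the paper's \cref{lem1}). The only difference is organizational: the paper introduces an auxiliary threshold $r>\gamma_n$, applies \cref{lem1} there, and lets $r\downarrow\gamma_n$ at the end, whereas you apply Markov's inequality directly at threshold $1-\gamma_n$ to the complement's posterior mass, which removes the limiting step.
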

The proof is deferred to \cref{app:confidencesetsenlarged}.

As a corollary to \cref{lem:weakconvergenceuniformprior,lem:coverageofenlargedcrediblesets} we have 

\begin{corollary}
Suppose $X^n$ is generated according to $\theta\in\Theta_n$. 
	Let $D_n(X^n)$ be a $1-\gamma_n$ credible set and let $C_n(X^n)$ its $\ceiling{\gamma_nn}$ enlargement. 
	When $-\log \rho(p_n,q_n)\ge \frac{\beta_n}n$, $g\ge 0$ is a constant so that in \cref{ex:bernoulli},  $g\ge \log\rh{\frac r{1-r}\bigvee \frac{1-r}r}$, in \cref{ex:beta} $g=2+2\log2$ and in \cref{ex:uniformonm} $g=1+\log2$,  then 
	\[
	P_{\theta}(\theta\in C_n(X^n)) \ge 1 -\frac {2\sqrt2}{1-\gamma_n} e^{-\gamma_nn\rh{\log \gamma_n +\beta_n/2 - 1- g /\gamma_n }/4}. 
	\] 
	If, instead of the condition on $\rho(p_n,q_n)$, $p_n=\frac{c_n}n$ and $q_n=\frac{d_n}n$, then
	\begin{align*}
	& P_{\theta}(\theta\in C_n(X^n))  \\ 
	\ge &1-\frac{2\sqrt2}{1-\gamma_n} \exp\rh{-\frac14\gamma_ n n\rh{\log\gamma_n + \frac14\rh{\sqrt{c_n}-\sqrt{d_n}}^2 - \frac1{8n}c_nd_n-1- g/\gamma_n}}.
\end{align*} 
\end{corollary}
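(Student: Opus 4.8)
The plan is to recognize this statement as a direct composition of the two cited results, with the single substitution $\alpha_n=\gamma_n$. First I would set $k_n=\ceiling{\gamma_n n}$, matching the enlargement radius in the definition of $C_n(X^n)$, and take $\alpha_n=\gamma_n$. Since $k_n=\ceiling{\gamma_n n}\ge \gamma_n n=\alpha_n n$, the hypothesis $k_n\ge\alpha_n n$ of \cref{lem:weakconvergenceuniformprior} is met, and the remaining hypotheses (the bound $-\log\rho(p_n,q_n)\ge\beta_n/n$, or alternatively the parametrization $p_n=c_n/n$, $q_n=d_n/n$, together with the example-dependent choice of $g$) are exactly those imposed in the corollary. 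Here $\theta$ plays the role of the true parameter $\theta_{0,n}$ in that theorem.

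Next I would invoke \cref{lem:weakconvergenceuniformprior} to control the posterior mass outside the ball $B_{k_n}(\theta)$. Writing $x_n$ for the right-hand side of that theorem evaluated at $\alpha_n=\gamma_n$, we obtain $P_{\theta}\Pi_n(\Theta_n\weg B_{k_n}(\theta)\mid X^n)\le x_n$, hence $P_{\theta}\Pi_n(B_{k_n}(\theta)\mid X^n)\ge 1-x_n$, which is precisely the input required by \cref{lem:coverageofenlargedcrediblesets}. Applying that lemma to the $\ceiling{\gamma_n n}$-enlargement $C_n(X^n)$ of the $1-\gamma_n$ credible set yields $P_{\theta}(\theta\in C_n(X^n))\ge 1-(1-\gamma_n)^{-1}x_n$. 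Substituting the two available expressions for $x_n$, namely the Hellinger-affinity bound and the Kesten-Stigum bound from the two displays of \cref{lem:weakconvergenceuniformprior}, reproduces the two displayed inequalities verbatim.

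The only points requiring a line of care, rather than any genuine obstacle, are bookkeeping: confirming that the distance $m_n$ used to define $C_n(X^n)$ is the symmetrized distance $k(\cdot,\cdot)\wedge(n-k(\cdot,\cdot))$, so that the enlargement radius $k_n$ matches the ball $B_{k_n}(\theta)$ appearing in \cref{lem:coverageofenlargedcrediblesets}, and observing that the ceiling in $k_n=\ceiling{\gamma_n n}$ only strengthens the inequality $k_n\ge\alpha_n n$. I expect no substantive difficulty, since both lemmas have already absorbed the analytic work; the corollary is purely a matter of specializing $\alpha_n$ to $\gamma_n$ and chaining the two bounds.
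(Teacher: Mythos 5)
Your proposal is correct and coincides with the paper's own (implicit) proof: the corollary is presented there as an immediate consequence of \cref{lem:weakconvergenceuniformprior,lem:coverageofenlargedcrediblesets}, obtained exactly as you describe by setting $\alpha_n=\gamma_n$, $k_n=\ceiling{\gamma_nn}$, feeding the resulting bound $x_n$ into the coverage lemma, and dividing by $1-\gamma_n$. The bookkeeping points you flag (the ceiling only strengthens $k_n\ge\gamma_n n$, and the enlargement distance must be the symmetrized distance $k(\cdot,\cdot)\wedge(n-k(\cdot,\cdot))$ so that it matches $B_{k_n}(\theta)$) are the only details involved, and you handle them correctly.
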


\section{Consistent hypothesis testing with posterior odds}\label{sec:hypothesistesting}

Besides parameter estimation, an interesting question is testing between two alternatives, whether the true parameter \(\theta\) is in the set \(A_n\) or in the set \(B_n\), where \(A_n,B_n\subseteq \Theta_n\) are disjoint non-random sets. In particular we consider symmetric testing between two alternatives 
\[
H_0:\theta\in A_n\quad \text{versus}\quad H_1:\theta\in B_n.
\]Taking, for example, \(A_n= \Theta_{n ,0}\) and \(B_n=\Theta_n\weg\Theta_{n ,0}\) allows us to test whether the data was generated from a  Erd\H os-R\'enyi model or the planted bi-section model.

We use posterior odds to test between the models, which is defined by \begin{equation}\label{eq:definitionofF}
F_n=\frac{\Pi_n(B_n\mid X^n)}{\Pi_n(A_n\mid X^n)}.
\end{equation}
Obviously, \(F_n<1\) counts as evidence in favour of \(H_0\) and \(F_n>1\) as evidence in favour of \(H_1\). In the following theorem we give sufficient conditions for this Bayesian test to be  valid in a frequentist sense.

\begin{theorem}\label{thm:posteriorodds}
	Let \(\theta\in \Theta_n\). 	When   
	\(P_{\theta}\Pi_n(A_n\mid X^n)\ge  1-a_n\), with \(0<  a_n<1\), 
	then
	\[
	P_{\theta}(F_n> t_n)\le 
	2a_n\rh{1+    \frac{1 }{t_n}}.
	\]
	If, in addition,  \(P_{\theta}\Pi_n(B_n\mid X^n)\le  b_n\), 
	then
	\[
	P_{\theta}(F_n> t_n)\le 2a_n+\frac{2b_n}{t_n}.\]
\end{theorem}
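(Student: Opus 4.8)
The plan is to analyze the two random posterior masses $\alpha:=\Pi_n(A_n\mid X^n)$ and $\beta:=\Pi_n(B_n\mid X^n)$ under $P_\theta$, so that $F_n=\beta/\alpha$ and the target event is $\{F_n>t_n\}=\{\beta>t_n\alpha\}$. Throughout I use the paper's convention that $P_\theta$ applied to a random variable denotes its $P_\theta$-expectation, while $P_\theta$ applied to an event denotes its probability. Since $A_n$ is non-empty, \cref{eq:prior} gives $\pi_n>0$ everywhere and the likelihood is strictly positive, so $\alpha>0$ and $F_n$ is well defined.

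The first observation is that, because $A_n$ and $B_n$ are disjoint, $\alpha+\beta=\Pi_n(A_n\cup B_n\mid X^n)\le1$ pointwise, hence $\beta\le1-\alpha$. Taking expectations and using the hypothesis $P_\theta\alpha\ge1-a_n$ yields $P_\theta\beta\le1-P_\theta\alpha\le a_n$. This is the step that makes the \emph{second} hypothesis unnecessary for the first bound: disjointness alone already controls the expected mass on $B_n$ by $a_n$.

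The key step is that a direct Markov bound on $F_n$ is not available, since $\alpha$ can be arbitrarily small; I would instead split according to the size of $\alpha$ via the inclusion
\[
\{F_n>t_n\}\subseteq\{\alpha<\tfrac12\}\cup\{\beta>\tfrac{t_n}2\}.
\]
Indeed, if $F_n>t_n$ and $\alpha\ge\tfrac12$, then $\beta=F_n\alpha>t_n\alpha\ge t_n/2$. Two applications of Markov's inequality then give
\[
P_\theta(\alpha<\tfrac12)=P_\theta(1-\alpha>\tfrac12)\le2\,(1-P_\theta\alpha)\le2a_n,
\qquad
P_\theta(\beta>\tfrac{t_n}2)\le\frac{2\,P_\theta\beta}{t_n}.
\]
For the first assertion, substituting $P_\theta\beta\le a_n$ from the previous paragraph gives $P_\theta(F_n>t_n)\le2a_n+2a_n/t_n=2a_n(1+1/t_n)$. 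For the second assertion, the extra hypothesis $P_\theta\beta\le b_n$ replaces the bound on the second term, yielding $P_\theta(F_n>t_n)\le2a_n+2b_n/t_n$.

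The only real subtlety, and the thing to get right, is the choice of the splitting threshold $\tfrac12$ for $\alpha$: a general threshold $s\in(0,1)$ produces $P_\theta(F_n>t_n)\le a_n/(1-s)+P_\theta\beta/(t_n s)$, and $s=\tfrac12$ is the symmetric choice that reproduces the stated constants. Beyond this, the argument is just the disjointness identity $\alpha+\beta\le1$ together with two Markov bounds, so no genuine obstacle remains.
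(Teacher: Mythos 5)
Your proof is correct and follows essentially the same route as the paper: disjointness of $A_n$ and $B_n$ reduces the first bound to the second, the event $\{F_n>t_n\}$ is split according to whether $\Pi_n(A_n\mid X^n)\ge\tfrac12$, and each piece is controlled by Markov's inequality. Your direct Markov bound on $P_\theta(\Pi_n(A_n\mid X^n)<\tfrac12)$ is exactly what the paper's Lemma~\ref{lem1} provides (that lemma is Markov's inequality applied to $1-\Pi_n(A_n\mid X^n)$), so there is no substantive difference.
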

We defer the proof to \cref{app:postodds}.

Suppose one rejects the null-hypothesis when \(F_n>t_n\), for some \(t_n>0\). The first order error is when \(H_0\) is true, so \(\theta\) is in fact in \(A_n\), but \(H_0\) is rejected (so \(F_n>t_n\)). The probability of this error is bounded by the theorem above. The error of second kind is when in fact \(H_1\) is true, but \(H_0\) is not rejected. This probability is given by \(P_{\theta}(F_n<t_n),\) \(\theta\in B_n\). As \(P_{\theta}(F_n<t_n)=P_{\theta}(F_n^{-1}>1/t_n),\) and reversing the roles of \(A_n \)
and \(B_n\) in \cref{thm:posteriorodds}, the probability of this event is also covered by the theorem, using posterior convergence results for \(\theta\in B_n\). The power of the test is defined as the probability of rejecting the null hypothesis when \(H_1\) is true.
As 
\(P_{\theta_{0, n}}(F_n>t_n)=P_{\theta_{0, n}}(F_n^{-1}<1/t_n)=1-P_{\theta_{0, n}}(F_n^{-1}\ge 1/t_n)\), this probability can be bounded from below with the theorem above. 

We have the following interesting corollary. 

\begin{corollary}
Let $m_0,m_1\in\set{0,\ldots,\floor{n/2}}$, $m_0\neq m_1$. 
	Suppose $X^n$ is generated according to $\theta\in\Theta_{n,m_0}$.
	Consider the test \[	H_0:\theta\in\Theta_{n,m_0}\quad \text{versus}\quad H_1:\theta\in\Theta_{n,m_1}.
	\]
	 For the prior in \cref{ex:bernoulli} with $r=1/2$, we have when 	
	$-\log \rho(p_n,q_n)\ge \frac{\alpha_n\log n}{n}$ for some sequence $\alpha_n$, then \[P_{\theta_{0,n}}(F_n>t_n)\le 4n^{1-\alpha_n/2}e^{n^{1-\alpha_n/2}}\rh{1+\frac1{t_n}}.\]
	
In the dense phase, when $c\ge -\log\rho(p_n,q_n)$, let $g\ge 0$ be a constant, so that $g\ge \log\rh{\frac r{1-r}\bigvee \frac{1-r}r}$ in \cref{ex:bernoulli}, $g=2+2\log 2$ in \cref{ex:beta}, and $g=1+\log 2$ in \cref{ex:uniformonm}.  Then
	\[P_{\theta}\Pi_n(F_n>t_n) \le 4\sqrt2\boundexactallpriors \rh{1+\frac1{t_n}}..
	\]
	The same bounds hold when we replace $H_1$ by $H_1:\theta\notin \Theta_{n,m_0}$. 
\end{corollary}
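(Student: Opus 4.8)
The plan is to apply \cref{thm:posteriorodds} with $A_n=\Theta_{n,m_0}$ and $B_n=\Theta_{n,m_1}$, feeding it the posterior-concentration bounds of \cref{thm:posteriorintheparameter}. The first inequality of \cref{thm:posteriorodds} requires only a value $a_n$ with $P_{\theta_{0,n}}\Pi_n(A_n\mid X^n)\ge 1-a_n$, so the whole task reduces to producing such an $a_n$ from the exact-recovery estimates.

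First I would observe that, since the data-generating $\theta_{0,n}$ lies in $A_n=\Theta_{n,m_0}$, monotonicity of the posterior on the singleton $\set{\theta_{0,n}}\subseteq A_n$ gives, pointwise in $X^n$,
\[
\Pi_n(A_n\mid X^n)\ge \Pi_n(\set{\theta_{0,n}}\mid X^n)=1-\Pi_n(\Theta_n\weg\set{\theta_{0,n}}\mid X^n).
\]
Integrating against $P_{\theta_{0,n}}$ yields $P_{\theta_{0,n}}\Pi_n(A_n\mid X^n)\ge 1-P_{\theta_{0,n}}\Pi_n(\Theta_n\weg\set{\theta_{0,n}}\mid X^n)$, so I may take $a_n=P_{\theta_{0,n}}\Pi_n(\Theta_n\weg\set{\theta_{0,n}}\mid X^n)$, which is exactly the quantity bounded in \cref{thm:posteriorintheparameter}.

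Next I would substitute the two relevant bounds and invoke the first inequality $P_{\theta_{0,n}}(F_n>t_n)\le 2a_n(1+1/t_n)$ of \cref{thm:posteriorodds}. In the uniform-prior case ($r=1/2$), the hypothesis $-\log\rho(p_n,q_n)\ge \alpha_n\log n/n$ gives $a_n\le 2n^{1-\alpha_n/2}e^{n^{1-\alpha_n/2}}$, and the prefactor $2$ turns this into the claimed $4n^{1-\alpha_n/2}e^{n^{1-\alpha_n/2}}(1+1/t_n)$; in the dense phase the bound $a_n\le 2\sqrt2\,\boundexactallpriors$ similarly produces the prefactor $4\sqrt2$.

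Finally, for the variant $H_1:\theta\notin\Theta_{n,m_0}$ one only changes $B_n$ to $\Theta_n\weg\Theta_{n,m_0}$; since $A_n$ is unchanged and the first inequality of \cref{thm:posteriorodds} controls $B_n$ only through the trivial bound $\Pi_n(B_n\mid X^n)\le 1$, the identical estimate for $a_n$ yields the identical conclusion. I expect essentially no obstacle here: the corollary is a direct combination of the two cited results, and the only step needing a little care is the harmless lower bound of the posterior mass of the whole class $\Theta_{n,m_0}$ by that of the true singleton, which is what lets the exact-recovery rate of \cref{thm:posteriorintheparameter} feed the type-I error control of \cref{thm:posteriorodds}.
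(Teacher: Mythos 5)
Your proposal is correct and matches the paper's own proof essentially verbatim: the paper likewise lower-bounds $\Pi_n(\Theta_{n,m_0}\mid X^n)$ by $\Pi_n(\set{\theta}\mid X^n)$ and then combines \cref{thm:posteriorintheparameter} with the first inequality of \cref{thm:posteriorodds}. Your write-up merely spells out the substitutions and the prefactors ($2a_n$ with $a_n$ the exact-recovery bound), which the paper leaves implicit.
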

\begin{proof}
	Note that when $\theta\in\Theta_{n,m_0}$, then 
	\begin{align*}
		P_\theta \Pi_n(\Theta_{n,m_0}\mid X^n)\ge P_\theta \Pi_n(\set\theta\mid X^n). 
	\end{align*}
	The results now follow from \cref{thm:posteriorintheparameter,thm:posteriorodds}.
\end{proof}

%


\appendix


\section{Definitions and conventions}
\label{app:defs}

Because we take the perspective of a frequentist using Bayesian
methods, we are obliged to demonstrate that Bayesian definitions
continue to make sense under the assumption that the data $X^n$ is
distributed according to a true, underlying $P_0$.

\begin{remark}
\label{rem:conv}
We assume given for every $n\geq1$, a random graph
$\samplen$ taking values in the (finite) space
$\scrX_n$ of all undirected graphs with $n$ vertices.
We denote the powerset of $\scrX_n$ by $\scrB_n$ and regard it as
the domain for probability distributions
$P_n:\scrB_n\to[0,1]$ a model $\scrP_n$ parametrized by
$\Theta_n\rightarrow\scrP_n:\theta\mapsto P_{\theta,n}$
with finite parameter spaces $\Theta_n$ (with powerset $\scrG_n$)
and uniform priors
$\Pi_n$ on $\Tht_n$. As frequentists, we assume that there exists
a `true, underlying distribution for the data'; in this case, that
means that for every $n\geq1$, there exists a $\tht_{0,n}\in\Tht_n$
and corresponding $P_{\tht_0,n}$ from which the
$n$-th graph $\samplen$ is drawn.
\end{remark}
\begin{definition}
Given $n\geq1$ and a prior probability measure $\Pi_n$ on
$\Tht_n$, define the \emph{$n$-th prior predictive distribution}
as:
\begin{equation}
  \label{eq:priorpred}
  P_n^{\Pi}(A) = \int_\Theta P_{\theta,n}(A)\,d\Pi_n(\theta),
\end{equation}
for all $A\in\scrB_n$. For any $B_n\in\scrG_n$ with $\Pi_n(B_n)>0$,
define also the \emph{$n$-th local prior predictive distribution},
\begin{equation}
  \label{eq:localpriorpred}
  P_n^{\Pi|B}(A) = \frac{1}{\Pi_n(B_n)}\int_{B_n}
    P_{\theta,n}(A)\,d\Pi_n(\theta),
\end{equation}
as the predictive distribution on $\scrX_n$ that results from the prior
$\Pi_n$ when conditioned on $B_n$.
\end{definition}
The prior predictive distribution $P_n^{\Pi}$ is the marginal
distribution for $\samplen$ in the Bayesian perspective that
considers parameter and sample jointly
$(\theta,\samplen)\in\Theta\times\scrX_n$
as the random quantity of interest. 
\begin{definition}
\label{def:posterior}
Given $n\geq1$,
a \emph{(version of) the posterior} is any set-function
$\scrG_n\times\scrX_n\rightarrow[0,1]:
(A,x^n)\mapsto\Pi(\,A\,|\samplen=x^n)$
such that,
\begin{enumerate}
\item for $B\in\scrG_n$, the map
  $\realizationn\mapsto\Pi(B|\samplen=\realizationn)$ is
  $\scrB_n$-measurable,
\item for all $A\in\scrB_n$ and $V\in\scrG_n$,
  \begin{equation}
    \label{eq:disintegration}
    \int_A\Pi(V|\samplen)\,dP_n^{\Pi} = 
    \int_V P_{\theta,n}(A)\,d\Pi_n(\theta).
  \end{equation}
\end{enumerate}
\end{definition}
Bayes's Rule is expressed through equality~(\ref{eq:disintegration})
and is sometimes referred to as a `disintegration' (of the joint
distribution of $(\theta,X^n)$). 
Because the models $\scrP_n$ are dominated (denote the density
of $P_{\theta,n}$ by $p_{\theta,n}$), the fraction of integrated likelihoods,
\begin{equation}
  \label{eq:posteriorfraction}
  \Pi(V|\samplen)= 
  {\displaystyle{\int_V p_{\theta,n}(\samplen)\,d\Pi_n(\theta)}} \biggm/
  {\displaystyle{\int_{\Theta_n} p_{\theta,n}(\samplen)\,d\Pi_n(\theta)}},
\end{equation}
for $V\in\scrG_n$, $n\geq1$ defines a regular version of the
posterior distribution.

For completeness sake, we include \cite[lemma 2.2]{Kleijn20}, which plays an essential role in our theorems on  posterior consistency.

\begin{lemma}\label{lem:posteriorconvergencekleijnlemma}
	For any \(B_n, V_n\in \scrG_n\) with \(\Pi_n(B_n)>0\) and any measurable map \(\phi_n:\scrX_n \to[0,1] \), \begin{align*}
	\int P_{\theta_n}\Pi_n(V_n\mid X^n)d\Pi_n(\theta_n\mid B_n)\le &  \int P_{\theta_n}\phi_n(X^n)d\Pi_n(\theta_n\mid B_n) \\
	& + \frac1{\Pi_n(B_n)} \int _{V_n}P_{\theta_n}(1-\phi_n(X^n))d\Pi_n(\theta_n). 
	\end{align*}
\end{lemma}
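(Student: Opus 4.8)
The plan is to run the classical test-splitting argument in the spirit of Le~Cam and the misspecification framework of \cite{Kleijn20}: insert the test $\phi_n$ into the posterior mass $\Pi_n(V_n\mid X^n)$ and handle the two resulting pieces separately. Concretely, I would start from the pointwise identity
\[
\Pi_n(V_n\mid X^n) = \Pi_n(V_n\mid X^n)\,\phi_n(X^n) + \Pi_n(V_n\mid X^n)\,(1-\phi_n(X^n)),
\]
and integrate both sides first against $P_{\theta_n}$ and then against the conditional prior $\Pi_n(\cdot\mid B_n)$. Since $\Pi_n(V_n\mid X^n)\le 1$ pointwise, the first piece is bounded by $\phi_n(X^n)$, and integrating yields exactly the first term on the right-hand side, $\int P_{\theta_n}\phi_n(X^n)\,d\Pi_n(\theta_n\mid B_n)$.

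For the second piece I would abbreviate $\psi_n:=1-\phi_n$, again a measurable map into $[0,1]$, and unfold the conditional prior as $d\Pi_n(\theta_n\mid B_n)=\Pi_n(B_n)^{-1}\mathbf 1_{B_n}(\theta_n)\,d\Pi_n(\theta_n)$, so that the contribution of the second piece equals
\[
\frac1{\Pi_n(B_n)}\int_{B_n} P_{\theta_n}\bigl[\Pi_n(V_n\mid X^n)\,\psi_n(X^n)\bigr]\,d\Pi_n(\theta_n).
\]
Because the integrand $P_{\theta_n}[\Pi_n(V_n\mid X^n)\psi_n(X^n)]$ is nonnegative, I can enlarge the domain of integration from $B_n$ to all of $\Theta_n$ without decreasing the value; the resulting unrestricted integral of $P_{\theta_n}[\,\cdot\,]$ over $\Theta_n$ is, by definition~\eqref{eq:priorpred} of the prior predictive, precisely $\int_{\scrX_n}\Pi_n(V_n\mid x^n)\psi_n(x^n)\,dP_n^{\Pi}(x^n)$.

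The key step — and the one I expect to require the most care — is then to apply the disintegration identity~\eqref{eq:disintegration} to convert this prior-predictive integral into an integral over $V_n$. Bayes's rule~\eqref{eq:disintegration} is stated for indicator functions $\mathbf 1_A$, but by linearity (and, in general, monotone convergence; here the spaces are finite, so a finite sum suffices) it extends to any bounded nonnegative measurable $f$, giving $\int f(x^n)\Pi_n(V_n\mid x^n)\,dP_n^{\Pi}=\int_{V_n} P_{\theta_n}f(X^n)\,d\Pi_n(\theta_n)$. Taking $f=\psi_n=1-\phi_n$ turns the prior-predictive integral into $\int_{V_n} P_{\theta_n}(1-\phi_n(X^n))\,d\Pi_n(\theta_n)$, and dividing by $\Pi_n(B_n)$ reproduces the second term on the right-hand side. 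Combining the two bounds completes the argument. The only genuinely delicate point is the direction of the inequality at the enlargement step, which is legitimate precisely because $\Pi_n(V_n\mid\cdot)$ and $\psi_n$ are both nonnegative; everything else is measure-theoretic bookkeeping that is trivial on the finite spaces at hand.
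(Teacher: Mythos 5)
Your proof is correct: the test-splitting decomposition, the bound $\Pi_n(V_n\mid X^n)\phi_n\le\phi_n$, the enlargement from $B_n$ to $\Theta_n$ using nonnegativity, and the extension of the disintegration identity~\eqref{eq:disintegration} from indicators to bounded nonnegative integrands all go through (trivially so on these finite spaces). The paper itself states this lemma without proof, citing \cite[lemma 2.2]{Kleijn20}, and your argument is precisely the standard one given there, so there is nothing to add.
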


\subsubsection*{Notation and conventions}

Asymptotic statements that end in ``... with high probability''
indicate that said statements are true with probabilities that
grow to one. The abbreviations \lhs\ and \rhs\ refer to ``left-''
and ``right-hand sides'' respectively.
For given probability measures $P,Q$ on a measurable space
$(\Omega,\scrF)$, we define
the Radon-Nikodym derivative $dP/dQ:\Omega\to[0,\infty)$,
$P$-almost-surely, referring {\it only} to the $Q$-dominated
component of $P$, following \citep{LeCam86}. We also \emph{define}
$(dP/dQ)^{-1}:\Omega\to(0,\infty]:\omega\mapsto 1/(dP/dQ(\omega))$,
$Q$-almost-surely.
Given random variables $Z_n\sim P_n$, weak convergence to a random
variable $Z$ is denoted by $Z_n\convweak{P_n}Z$, convergence
in probability by $Z_n\convprob{P_n}Z$ and almost-sure convergence
(with coupling $P^\infty$) by $Z_n\convas{P^{\infty}}Z$.
The integral of a
real-valued, integrable random variable $X$ with respect to a
probability measure $P$ is denoted $PX$, while integrals over
the model with respect to priors and posteriors are always written
out in Leibniz's or sum notation. The cardinality of a set $B$ is
denoted $|B|$.


\section{Existence of suitable tests}
\label{app:PBMtests}

Given $n\ge 1$, and two class assignment vectors
$\theta,\eta\in\Theta_n$,
we are interested in determining testing power, for which we need the likelihood ratio
$dP_{\eta}/dP_{\theta}$.

Fix $n\ge 1$, and let $X^n$ denote the random graph associated with
$\theta\in\Theta_n$, and let \(m_\theta\) be the number of 1-labels of \(\theta\), so \(\theta\in \Theta_{n,m_\theta}\). Let $\eta$ denote another element of $\Theta_n$ and  suppose \(\eta\in \Theta_{n ,m_\eta}\), for some \(m_\eta \in\set{0,\ldots,\floor{n/2}}\) (which might or might not be equal to \(m_\theta\)). 
Compare $p_{\theta}(X^n)$ with $p_{\eta}(X^n)$ in the likelihood
ratio. Recall that the likelihood of $\eta$ is given by,
\[
  p_{\eta}(X^n)=\prod_{i<j} Q_{i,j;n}(\eta)^{X_{ij}}
    (1-Q_{i,j;n}(\eta))^{1-X_{ij}},
\]
where 
\[
Q_{i,j;n}(\eta) = \begin{cases}
	\,\,p_n,&\quad\text{if $\eta_{i}=\eta_{j}$,}\\
	\,\,q_n,&\quad\text{if $\eta_{i}\neq\eta_{j}$.}
\end{cases}
\]
Consider 
\[
  \begin{split}
    D_{1}(\theta,\eta)&=\{(i,j)\in\{1,\ldots,n\}^2:\,i<j,\,
      \theta_{i}=\theta_{j},\,
      \eta_{i}\neq\eta_{j}\},\\
    D_{2}(\theta,\eta)&=\{(i,j)\in\{1,\ldots,n\}^2:\,i<j,\,
      \theta_{i}\neq\theta_{j},\,
      \eta_{i}=\eta_{j}\}.
  \end{split}
\]
 Also define,
\[
  (S_n,T_n):=\Bigl(\sum\{X_{ij}:(i,j)\in D_{1}(\theta,\eta)\},
    \sum \{X_{ij}:(i,j)\in D_{2}(\theta,\eta)\}\Bigr),
\]
and note that, under $P_{\theta}$ and $P_{\eta}$,
\begin{equation}
  \label{eq:SnTn}
  (S_n,T_n)\sim\begin{cases}
  \text{Bin}(|D_{1}(\theta,\eta)|,p_n)\times\text{Bin}(|D_{2}(\theta,\eta)|,q_n),
    \quad\text{if $X^n\sim P_{\theta}$},\\
  \text{Bin}(|D_{1}(\theta,\eta)|,q_n)\times\text{Bin}(|D_{2}(\theta,\eta)|,p_n),
    \quad\text{if $X^n\sim P_{\eta}$}.
  \end{cases}
\end{equation}
Since $S_n$ and $T_n$ are independent, the likelihood ratio is fixed
as a product two exponentiated binomial random variables: 
\begin{equation}
  \label{eq:pbmlikratio}
  \frac{p_{\eta}}{p_{\theta}}(X^n)
  = \biggl(\frac{1-p_n}{p_n}\,\frac{q_n}{1-q_n}\biggr)^{S_n-T_n}
    \biggl(\frac{1-q_n}{1-p_n}\biggr)^{|D_{1,n}|-|D_{2,n}|}
\end{equation} 
This gives rise to the following lemma:
\begin{lemma}
\label{lem:testingpower}
Let $n\geq1$, $\theta,\eta\in\Theta_n$ be given. Then there
exists a test function $\phi:\scrX_n\to[0,1]$ such that,
\begin{align*}
 & \pi_n(\theta)  P_{\theta}\phi(X^n) + \pi_n(\eta) P_{\eta}(1-\phi(X^n))\\
 \le & \pi_n(\theta)^{1/2}\pi_n(\eta)^{1/2}\rho(p_n,q_n)^{|D_{1,n}|+|D_{2,n}|}.
\end{align*}
\end{lemma}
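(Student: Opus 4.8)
The plan is to read the left-hand side as the weighted sum of the two error probabilities of a test between the simple hypotheses $P_\theta$ and $P_\eta$, with weights $\pi_n(\theta)$ and $\pi_n(\eta)$, and to make it small by taking $\phi$ to be the Bayes-optimal (likelihood-ratio) test. Since $\scrX_n$ is finite, every $\phi:\scrX_n\to[0,1]$ is measurable and I may write
\[
\pi_n(\theta)P_\theta\phi(X^n)+\pi_n(\eta)P_\eta(1-\phi(X^n))
=\sum_{x^n\in\scrX_n}\Bigl[\pi_n(\eta)p_\eta(x^n)+\phi(x^n)\bigl(\pi_n(\theta)p_\theta(x^n)-\pi_n(\eta)p_\eta(x^n)\bigr)\Bigr].
\]
Choosing $\phi(x^n)=\I\{\pi_n(\eta)p_\eta(x^n)>\pi_n(\theta)p_\theta(x^n)\}$ minimizes each summand, leaving the integrand equal to $\min\bigl(\pi_n(\theta)p_\theta(x^n),\pi_n(\eta)p_\eta(x^n)\bigr)$.

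Next I would apply the elementary inequality $\min(a,b)\le\sqrt{ab}$ termwise, which gives
\[
\pi_n(\theta)P_\theta\phi(X^n)+\pi_n(\eta)P_\eta(1-\phi(X^n))
\le\pi_n(\theta)^{1/2}\pi_n(\eta)^{1/2}\sum_{x^n\in\scrX_n}\sqrt{p_\theta(x^n)p_\eta(x^n)}.
\]
The remaining sum is precisely the Hellinger affinity between $P_\theta$ and $P_\eta$ with respect to counting measure on $\scrX_n$, so it only remains to evaluate it as $\rho(p_n,q_n)^{|D_1(\theta,\eta)|+|D_2(\theta,\eta)|}$.

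For this last evaluation I would use that the $\tfrac12 n(n-1)$ edge indicators $X_{ij}$ are mutually independent under both $P_\theta$ and $P_\eta$ (\cf\ \cref{eq:SnTn,eq:pbmlikratio}), so both likelihoods factorize over edges and the affinity of a product is the product of the per-edge affinities. On an edge $(i,j)$ the two marginals are the single Bernoulli laws $\mathrm{Ber}(Q_{ij}(\theta))$ and $\mathrm{Ber}(Q_{ij}(\eta))$, whose affinity equals $\sqrt{Q_{ij}(\theta)Q_{ij}(\eta)}+\sqrt{(1-Q_{ij}(\theta))(1-Q_{ij}(\eta))}$; this is $1$ when $Q_{ij}(\theta)=Q_{ij}(\eta)$ and $\rho(p_n,q_n)$ when one of them equals $p_n$ and the other $q_n$. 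The marginals differ exactly when $\theta_i=\theta_j,\ \eta_i\neq\eta_j$ or $\theta_i\neq\theta_j,\ \eta_i=\eta_j$, i.e. precisely on the edges of $D_1(\theta,\eta)\cup D_2(\theta,\eta)$; since these two sets are disjoint there are exactly $|D_1(\theta,\eta)|+|D_2(\theta,\eta)|$ such edges, each contributing a factor $\rho(p_n,q_n)$ and all remaining edges a factor $1$. This yields the stated bound.

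The argument is entirely routine; the only place asking for a little care is the factorization of the Hellinger affinity over edges together with the bookkeeping identifying $D_1\cup D_2$ as the set of edges on which the marginals disagree — but both are already set up by the decomposition preceding the lemma, so I anticipate no genuine obstacle.
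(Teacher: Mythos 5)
Your proposal is correct, and while it shares the paper's overall skeleton (bound the weighted error sum of a test by the weighted Hellinger affinity, then evaluate that affinity as $\rho(p_n,q_n)^{|D_1|+|D_2|}$), both of its component steps are executed differently. For the testing bound, the paper simply invokes a known result (LeCam; lemma~2.7 of \citet{Kleijn20}) stating that the likelihood ratio test has weighted error bounded by the Hellinger transform $\pi_n(\theta)^{1/2}\pi_n(\eta)^{1/2}P_\theta\bigl(p_\eta/p_\theta\bigr)^{1/2}$, whereas you rederive it from scratch: the Bayes-optimal test $\phi=\I\{\pi_n(\eta)p_\eta>\pi_n(\theta)p_\theta\}$ turns the left-hand side into $\sum_{x^n}\min\bigl(\pi_n(\theta)p_\theta(x^n),\pi_n(\eta)p_\eta(x^n)\bigr)$, and $\min(a,b)\le\sqrt{ab}$ does the rest. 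For the evaluation of the affinity, the paper works with the explicit likelihood-ratio representation \eqref{eq:pbmlikratio}, writes $P_\theta(p_\eta/p_\theta)^{1/2}$ in terms of the binomial variables $(S_n,T_n)$ of \eqref{eq:SnTn}, and computes it via their moment-generating functions; you instead exploit the product structure directly, tensorizing the Hellinger affinity over the independent edge indicators, so that edges with $Q_{ij}(\theta)=Q_{ij}(\eta)$ contribute a factor $1$ and the edges in the disjoint union $D_1\cup D_2$ each contribute $\rho(p_n,q_n)$. Your route is more self-contained and arguably cleaner: the tensorization argument avoids both the external citation and the MGF bookkeeping, while the paper's version has the advantage of reusing the likelihood-ratio and binomial machinery it has already set up for other purposes. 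Both computations yield the same exact identity, so there is no loss of sharpness either way.
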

where $\rho(p,q)$ is the Hellinger-affinity between two Bernoulli-distributions with
parameters $p$ and $q$, given by
\[
\rho(p,q)=p^{1/2}q^{1/2}+(1-p)^{1/2}(1-q)^{1/2}.
\] 
\begin{proof}
The likelihood ratio test $\phi(X^n)$ has testing power bounded by
the Hellinger transform,
\begin{align*}
&\pi_n(\theta)  P_{\theta}\phi(X^n) + \pi_n(\eta) P_{\eta}(1-\phi(X^n))\\
\le &  \pi_n(\theta)^{1/2}\pi_n(\eta)^{1/2}
    P_{\theta}\Bigl(\frac{p_{\eta}}{p_{\theta}}(X^n)\Bigr)^{1/2},
\end{align*}
(see, \eg\ \cite{LeCam86} and \cite[lemma 2.7]{Kleijn20}).
Then
\[
  \begin{split}
  P_{\theta}\biggl(\frac{p_{\eta}}{p_{\theta}}(X^n)\biggr)^{1/2}
    &= P_{\theta}
    \biggl(\frac{p_n}{1-p_n}\,\frac{1-q_n}{q_n}\biggr)^{\ft12(T_n-S_n)}
    \biggl(\frac{1-q_n}{1-p_n}\biggr)^{\ft12(|D_{1,n}|-|D_{2,n}|)}\\
    &= Pe^{\ft12\lambda_nS_n}\,Pe^{-\ft12\lambda_nT_n}
    \biggl(\frac{1-q_n}{1-p_n}\biggr)^{\ft12(|D_{1,n}|-|D_{2,n}|)},
  \end{split}
\]
where $\lambda_n:=\log(1-p_n)-\log(p_n)+\log(q_n)-\log(1-q_n)$ and
$(S_n,T_n)$ are distributed binomially, as in the first part of
(\ref{eq:SnTn}). Using the moment-generating function of the
binomial distribution, we conclude that,
\[
  \begin{split}
  &P_{\theta}\biggl(\frac{p_{\eta}}
    {p_{\theta}}(X^n)\biggr)^{1/2}
  =\Bigl(1-p_n
      +p_n\Bigl(\frac{1-p_n}{p_n}\,\frac{q_n}{1-q_n}\Bigr)^{1/2}
      \Bigr)^{|D_{1,n}|}\\
  &   \qquad\times\Bigl(1-q_n
      +q_n\Bigl(\frac{p_n}{1-p_n}\,\frac{1-q_n}{q_n}\Bigr)^{1/2}
      \Bigr)^{|D_{2,n}|}
      \biggl(\frac{1-q_n}{1-p_n}\biggr)^{\ft12(|D_{1,n}|-|D_{2,n}|)}\\
  &= \rho(p_n,q_n)^{|D_{1,n}|+|D_{2,n}|},
  \end{split}
\]
which proves the assertion. 
\end{proof}

\subsection{The sizes of \(D_{1}(\theta,\eta)\) and \(D_{2}(\theta,\eta)\)}

Note that $\set{1,\ldots,n}$ is the disjoint union of $V_{00}\cup V_{01}\cup V_{10}\cup V_{11}$ where \[
V_{ab} = \set{i: \theta_i=a,\eta_i=b}. 
\]
\begin{itemize}
	\item For $i \in V_{00}, j \in V_{01}$, $\theta_i=\theta_j=0$ and $\eta_i=0\neq 1=\eta_j$.
	\item For $i \in V_{01}, j \in V_{00}$, $\theta_i=\theta_j=0$ and $\eta_i=1\neq 0=\eta_j$.
	\item For $i \in V_{10}, j \in V_{11}$, $\theta_i=\theta_j=1$ and $\eta_i=0\neq 1=\eta_j$.
	\item For $i \in V_{11}, j \in V_{10}$, $\theta_i=\theta_j=1$ and $\eta_i=1\neq 0=\eta_j$. 
\end{itemize}
Note that in $D_{1}(\theta,\eta)$ we only count pairs $(i,j)$ with $i<j$, so
 \[
|D_{1}(\theta,\eta)| = \frac12\rh{2|V_{00}|\cdot |V_{01}|+ 2|V_{11}|\cdot |V_{10}|} = |V_{00}|\cdot |V_{01}|+ |V_{11}|\cdot |V_{10}|. 
\]

\begin{itemize}
	\item For $i \in V_{00}, j \in V_{10}$, $\theta_i=0\neq 1 =\theta_j$ and $\eta_i=\eta_j=0$. 
	\item For $i \in V_{10}, j \in V_{00}$, $\theta_i=1\neq 0 =\theta_j$ and $\eta_i=\eta_j=0$.
	\item For $i \in V_{01}, j \in V_{11}$, $\theta_i=0\neq 1 = \theta_j$ and $\eta_i=\eta_j=1$.
	\item For $i \in V_{11}, j \in V_{01}$, $\theta_i=1\neq 0 = \theta_j$ and $\eta_i=\eta_j=1$.
\end{itemize}
So, similar as with $D_{1}(\theta,\eta)$, \[
|D_{2}(\theta,\eta)| = |V_{00}|\cdot |V_{10}|+ |V_{01}|\cdot |V_{11}|. 
\]

So 
\[
|D_{1}(\theta,\eta)|+|D_{2}(\theta,\eta)| = |V_{00}|\big(|V_{01}|+|V_{10}|)+|V_{11}|\big(|V_{01}|+|V_{10}|\big)=\big(|V_{00}|+|V_{11}|\big)\big(|V_{01}|+|V_{10}|\big).
\]

Let $k=|V_{10}|+|V_{01}|$. 
Obviously, \[
|V_{00}|+|V_{01}|+|V_{10}|+|V_{11}|=n.
\]
So 
\[
|V_{00}|+|V_{11}|=n -k.
\]
So 
\begin{equation}\label{eq:sizeofD1andD2inVnmk}
d(\theta,\eta):=|D_{1}(\theta,\eta)|+ |D_{2}(\theta,\eta)|= k(n-k). 
\end{equation}

\section{Proofs}

\subsection{Proof of \cref{prop:postconvset}}\label{app:proofofposteriorconvergencegeneralcase}
Obviously, \(D_{1}(\theta,\eta)\) and \(D_{2}(\theta,\eta)\) are disjoint.
	According to  \cref{lem:posteriorconvergencekleijnlemma}  (with $B_n=\{\theta\}$),
	for any tests $\phi_{S}:\scrX_N\to[0,1]$,
	we have,
	\[
	P_{\theta}\Pi_n(S|X^n)
	\le P_{\theta}\phi_{S}(X^n)
	+\frac{1}{\pi_n(\theta)} \sum_{\eta\in S}\pi_n(\eta)
	P_{\eta}(1-\phi_{S}(X^n)).
	\]
	\Cref{lem:testingpower} proves that for any $\eta\in S$
	there is a test function $\phi_{\eta}$ that distinguishes
	$\theta$ from $\eta$ as follows,
	\begin{align*}
	  & P_{\theta}\phi_\eta(X^n) + \frac{\pi_n(\eta)}{\pi_n(\theta)} P_{\eta}(1-\phi_\eta(X^n))\\
	  \le &  \frac{\pi_n(\eta)^{1/2}}{\pi_n(\theta)^{1/2}}\rho(p_n,q_n)^{|D_{1}(\theta,\eta)|+|D_{2}(\theta,\eta)|}
	  \le \frac{\pi_n(\eta)^{1/2}}{\pi_n(\theta)^{1/2}}\rho(p_n,q_n)^{B_n},
	\end{align*}
	where the last inequality follows from the fact that \(\rho(p_n, q_n)\le 1\) and the assumption \(|D_{1}(\theta,\eta)\cup D_{2}(\theta,\eta)|\ge  B\), for all \(\eta\in S\). 
	Then using test functions
	$\phi_{S}(X^n)=\max\{\phi_{\eta}(X^n):\eta\in S\}$, 
	we have,
	\[
	P_{\theta}\phi_{S}(X^n)\le
	\sum_{\eta\in S}  P_{\theta}\phi_{\eta}(X^n),
	\]
	so that,
	\begin{align*}
	P_{\theta}\Pi_n(S|X^n) \le & \sum_{\eta\in S}  P_{\theta}\phi_{\eta}(X^n) + \frac{1}{\pi_n(\theta)} \sum_{\eta\in S}\pi_n(\eta)
	P_{\eta}(1-\phi_{S}(X^n))\\
	\le & \sum_{\eta\in S} \rh{P_{\theta}\phi_n(X^n) + \frac{\pi_n(\eta)}{\pi_n(\theta)} P_{\eta}(1-\phi_n(X^n))}\\
	\le &  \rho(p_n,q_n)^{B}\sum_{\eta\in S}\frac{\pi_n(\eta)^{1/2}}{\pi_n(\theta)^{1/2}}.
	\end{align*}

\subsection{Upper bound for $\rho(p_n,q_n)^{n/2}$}\label{app:proofsuffpostinchernoffhellingercase}

 Using \cref{lem:sqrtoneminusxislessthanorequaltooneminusxovertwo} we see 
\begin{align}
\rho(p_n,q_n) = & \sqrt{p_nq_n} + \sqrt{1-p_n}\sqrt{1-q_n}\nonumber \\
	\le & \sqrt{p_nq_n} + \rh{1-p_n/2}\rh{1-q_n/2} \nonumber\\ 
	= & 1  - \frac12\rh{\sqrt{p_n}-\sqrt{q_n}}^2 + p_nq_n/4\label{eq:boundforrho} \\ 
	= & 1 - \frac1n\rh{\frac12\rh{\sqrt{a_n}-\sqrt{b_n}}^2\log n - \frac1{4n}a_nb_n(\log n)^2} . \nonumber
\end{align}
So by \cref{lem:oneplusxdivrtothepowerrissmallerthanetothepowerx} \begin{align*}
	\rho(p_n,q_n)^{n/2} \le &  \rh{1 - \frac1{n/2}\rh{\frac14\rh{\sqrt{a_n}-\sqrt{b_n}}^2\log n - \frac1{8n}a_nb_n(\log n)^2} }^{n/2}\\
	\le &  \exp\rh{-\frac14\rh{\sqrt{a_n}-\sqrt{b_n}}^2\log n + \frac1{8n}a_nb_n(\log n)^2}.
\end{align*}

\subsection{Proof of \cref{thm:posteriorintheparameter}}\label{app:proofpostconvergenceinoneparameter}


 Define $V_{n,k}(\theta)=\set{\eta\in \Theta_{n}:k(\theta,\eta)=k}$. Note that for $\theta\in\Theta_n$,  $(1-\theta_1,\ldots,1-\theta_n)\notin \Theta_n$, hence $V_{n,k}$ is empty for $k\ge n$. Note that for $k=1,\ldots,n-1$ $V_{n,k}$ has at most $ \binom nk$ elements. It follows from \cref{eq:sizeofD1andD2inVnmk} that for all  $\eta\in V_{n,k}, |D_1(\theta,\eta)\cup D_2(\theta,\eta)|=k(n-k)$.

It follows from \cref{prop:postconvset}, that 
\begin{align}
	P_{\theta}
	\Pi_n\bigl(V_{n,k}\bigm|X^n\bigr)
	\le &  \rho(p_n,q_n)^{k(n-k)}\binom nk\sup_{\eta\in V_{n,k}}\sqrt{\frac{\pi_n(\eta)}{\pi_n(\theta)}}. \label{eq:upperboundforVnk}
\end{align}

For the uniform prior, so when $r=1/2$ in  \cref{ex:bernoulli}, $\pi_n(\eta)/\pi_n(\theta)=1$ for all $\theta,\eta\in \Theta_n$,  
so 
\begin{align*}
		 P_{\theta_{0,n}}\Pi_n(\Theta_{n}\weg \set{\theta_{0,n}}\mid X^n)= & \sum_{k=1}^{n-1} P_{\theta_{0,n}}\Pi_n(V_{n,k}(\theta)\mid X^n)\\
	\le & \sum_{k=1}^{n-1}\binom nk  \rho(p_n,q_n)^{k(n-k)}\\  \le & 2n\rho(p_n,q_n)^{n/2}e^{n\rho(p_n,q_n)^{n/2}},
\end{align*}
where we use \cref{eq:upperboundforVnk} for the first, and \cref{lem:boundforbinomialsum} for the second bound. Hence, when $-\log \rho(p_n,q_n)\ge \frac{\alpha_n\log n}{n}$ for some sequence $\alpha_n$, then \[P_{\theta_{0,n}}\Pi_n(\Theta_{n}\weg \set{\theta_{0,n}}\mid X^n) \le 2n^{1-\alpha_n/2}e^{n^{1-\alpha_n/2}} .\] 

Therefore posterior convergence is achieved once  \[(\alpha_n-2)\log n\to\infty.\]

%
The  sufficient condition for  $p_n=\frac{a_n\log n } n$ and $q_n=\frac{b_n\log n}n$ folows from \cref{app:proofsuffpostinchernoffhellingercase}.

When $\theta\in\Theta_{n,m_\theta}$ and $\eta\in\Theta_{n,m_\eta}$, then \begin{align*}
	 \frac{\pi_n(\eta)}{\pi_n(\theta)} 
	=  \frac{|\Theta_{n,m_\theta}|\pi_n(m_\eta)}{|\Theta_{n,m_\eta}| \pi_n(m_\theta)}
\end{align*}

So in \cref{ex:bernoulli}, for general $r\in(0,1),$
\begin{align*}
	 \frac{\pi_n(\eta)}{\pi_n(\theta)} = \frac{r^{m_\eta}(1-r)^{n-m_\eta}+r^{n-m_\eta}(1-r)^{m_\eta}}{r^{m_\theta}(1-r)^{n-m_\theta}+r^{n-m_\theta}(1-r)^{m_\theta}},
\end{align*}
which, according to \cref{lem:upperboundforr} is bounded by (and proportional to) 
\[
2\rh{\frac r{1-r}\bigvee \frac{1-r}r}^{m_\theta-m_\eta},
\]
which in turn is bounded by $2e^{fn/2}$. 

In \cref{ex:beta}, 
\begin{align*}
	 \frac{\pi_n(\eta)}{\pi_n(\theta)} = \frac{B(m_\eta+\alpha,n-m_\eta+\beta)+B(n-m_\eta+\alpha,m_\eta+\beta)}{B(m_\theta+\alpha,n-m_\theta+\beta)+B(n-m_\theta+\alpha,m_\theta+\beta)},
\end{align*}
which is bounded by $(2e)^n$, according to \cref{lem:boundonthefractionofthebetas}. 

Finally, in \cref{ex:uniformonm}, 
\begin{align*}
	 \frac{\pi_n(\eta)}{\pi_n(\theta)} = \frac{|\Theta_{n,m_\theta}|}{|\Theta_{n,m_\eta}|}\le \binom n{m_\theta}\le (2e)^{n/2}.
	 \end{align*} 
	 The result now follows from our previous calculations.  
%

\subsection{Proof of \cref{lem:weakconvergenceuniformprior}}\label{app:proofofKerstenStigumcase}

We use \cref{eq:upperboundforVnk}, the sets $V_{n,k}$ and the bounds of $\pi_n(\eta)/\pi_n(\theta)$ in \cref{app:proofpostconvergenceinoneparameter}.  Note that for \cref{ex:bernoulli,ex:beta,ex:uniformonm}, we have a bound $\pi_n(\eta)/\pi_n(\theta)\le 2C^n$, for some prior depend constant $C\ge 1$. 

%

By \cref{prop:postconvset}, 
 when $k_n\ge \alpha_nn$, we see that, 
\begin{align}
	 &P_{\theta_{0,n}}\Pi_n(\Theta_{n}\weg B_{k_n}\mid X^n) \nonumber \\
	 = &   \sum_{k=k_n}^{n-k_n}P_{\theta_{0,n}}\Pi_n(\Theta_{n}\weg B_{k_n}\mid X^n) \nonumber  \\
	\le  & \sqrt2C^{n/2}\cdot \sum_{k=k_n}^{n-k_n}\binom nk \rho(p_n,q_n)^{k(n-k)}\nonumber\\
	\le &  \sqrt2C^{n/2}\cdot \sum_{k=k_n}^{\floor{n/2}}\binom nk \rho(p_n,q_n)^{kn/2}\nonumber\\
	\le & \sqrt2C^{n/2} \cdot  \sum_{k=k_n}^\infty \rh{\frac{en}k}^k \rho(p_n,q_n)^{kn/2}\nonumber\\
	\le &  \sqrt2C^{n/2}\cdot\sum_{k=k_n}^\infty \rh{\frac{e}{\alpha_n}}^k \rho(p_n,q_n)^{kn/2}\nonumber\\
	\le & \sqrt2C^{n/2}\cdot  \frac{2\rh{\frac e{\alpha_n}\rho(p_n,q_n)^{n/2}}^{\alpha_nn}}{1-\frac e{\alpha_n}\rho(p_n,q_n)^{n/2}}.\label{eq:upperboundkerstenstigumphase}
	\end{align}
	
	Using \cref{lem:inequalityofanepowerdividedbyoneminusanepower}, and the fact that $-\log \rho(p_n,q_n)\ge \frac{\beta_n}n$,  \begin{align*}
		P_{\theta_{0,n}}\Pi_n(\Theta_{n}\weg B_{k_n}\mid X^n) 
	\le & 	 2\sqrt2C^{n/2} e^{-\alpha_nn\rh{ \log \alpha_n +\beta_n/2 - 1}/4}.
	\end{align*}
	The results now follow from the bounds on $\pi_n(\eta)/\pi_n(\theta)$ in \cref{app:proofpostconvergenceinoneparameter}. 

When $p_n=\frac{c_n}n$ and $q_n=\frac{d_n}n$, we get in a similar way as in \cref{app:proofsuffpostinchernoffhellingercase},
\begin{align*}
	\rho(p_n,q_n)^{n/2} 
	\le & \exp\rh{- \rh{\frac14\rh{\sqrt{c_n}-\sqrt{d_n}}^2 - \frac1{8n}c_nd_n}}.
\end{align*}

So \[
\frac e{\alpha_n}\rho(p_n,q_n)^{n/2} \le \exp\rh{1-\log\alpha_n - \rh{\frac14\rh{\sqrt{c_n}-\sqrt{d_n}}^2 - \frac1{8n}c_nd_n}}.
\]
Using \cref{eq:upperboundkerstenstigumphase} and \cref{lem:inequalityofanepowerdividedbyoneminusanepower}, we derive 
\begin{align*}
	& P_{\theta_{0,n}}\Pi_n(\Theta_{n}\weg B_{k_n}\mid X^n)  \\ 
	\le &2\sqrt2C^{n/2} \exp\rh{-\frac14\alpha_ n n\rh{\log\alpha_n + \frac14\rh{\sqrt{c_n}-\sqrt{d_n}}^2 - \frac1{8n}c_nd_n-1}}.
\end{align*}
The results now follow from the bounds on $\pi_n(\eta)/\pi_n(\theta)$ in \cref{app:proofpostconvergenceinoneparameter}.

\subsection{Equivalence of $(\sqrt{c_n}-\sqrt{d_n})^2\to\infty$ to \cref{eq:MNSdetect}}\label{app:proofofalmostexactcreterium}
Note that the necessary and sufficient condition \cref{eq:MNSdetect} translates to \[
\frac{(c_n-d_n)^2}{c_n+d_n}\to \infty. 
\]

Note that 
\begin{align*}
	& \frac{(c_n-d_n)^2}{c_n+d_n} = \frac{(\sqrt{c_n}-\sqrt{d_n})^2(\sqrt{c_n}+\sqrt{d_n})^2}{(\sqrt{c_n}+\sqrt{d_n})^2-2\sqrt{c_nd_n}}.
\end{align*}
Note that \begin{align*}
0 \le 	(\sqrt{c_n}-\sqrt{d_n})^2 = c_n+d_n -2\sqrt{c_nd_n}, 
\intertext{so}
\sqrt{c_nd_n}\le \frac{c_n+d_n}2,
\intertext{which is equivalent to }
2\sqrt{c_nd_n}\le \frac{c_n+d_n}2+\sqrt{c_nd_n}=\frac12(\sqrt{c_n}+\sqrt{d_n})^2.
\end{align*}

It follows that 
\begin{align*}
(\sqrt{c_n}-\sqrt{d_n})^2 \le \frac{(c_n-d_n)^2}{c_n+d_n} \le 2(\sqrt{c_n}-\sqrt{d_n})^2. 
\end{align*}

Hence 
\[
\frac{(c_n-d_n)^2}{c_n+d_n} \to\infty \quad\text{if and only if}\quad (\sqrt{c_n}-\sqrt{d_n})^2\to\infty. 
\]

\subsection{Confidence sets}

\begin{lemma}\label{lem1}
	Let \(n\ge1\). Let \(x^n\to B_n(x^n)\subset\Theta_n\) be a set valued map, such that  \(P_{\theta}\Pi_n(B_n(X^n)\mid X^n)\ge 1-a_n\), with \(0<a_n<1\).
	Then, for every \(0<r_n<1\), \[
	P_{\theta}\left(\Pi_n(B_n(X^n)\mid X^n)\ge 1-r_n\right)\ge  1-\frac1{r_n}a_n.\]
\end{lemma}
\begin{proof}	 
	Let \(E_n=\set{\omega:\Pi_n(B_n(X_n(\omega))\mid X^n(\omega))\ge 1-r_n}\) 
	be the event that the posterior mass of \(B_n(X^n)\) is at least \(1-r_n\). Let \(\delta>0\). Suppose that  \(P_{\theta}(E_{n})\le 1-\frac1{r_n}a_n-\delta\). Then \begin{align*}
	P_{\theta}\Pi_n(B_n(X^n)\mid X^{n})\le & P_{\theta}( E_{n})+(1-r_n)P_{\theta}(E_{n}^c)\\
	= &P_{\theta}( E_{n}) + (1-r_n) (1-P_{\theta}( E_{n}))\\
	= &r_nP_{\theta}( E_{n}) + 1-r_n \\
	\le & r_n\rh{1-\frac1{r_n}a_n-\delta} + 1-r_n\\
	= & 1-a_n-\delta r_n<1-a_n,
	\end{align*}
	which contradicts with our assumption that  \(P_{\theta}\Pi_n(B_n(X^n)\mid X^n)\ge 1-a_n\). Hence \(P_{\theta}(E_{n})> 1-\frac1{r_n}a_n-\delta\). As this holds for every \(\delta>0\), it follows that \(P_{\theta}(E_{n})\ge 1-\frac1{r_n}a_n\).
\end{proof}

\subsubsection{Proof of \cref{lem:crediblesettoconfidencesets}}\label{app:confidencesets}
	Let \(E_n=\set{\Pi(\set{\theta}\mid X^n)\ge r}\) be the event that \(\set{\theta}\) has posterior mass at least \(r\), \(r>\alpha_n\). 
	It follows from \cref{lem1} that \(P_{\theta}(E_n)\ge  1-\frac1{1-r}x_n\).
	As \(D_n(X^n)\) has at least \(1-\alpha_n\) posterior mass, \(D_n(X^n)\) and \(\set{\theta}\) cannot be disjoint on the event \(E_n\), as \(1-\alpha_n+r>1\). In other words, \(\theta\in D_n(X^n)\) on \(E_n\). So \(P_{\theta}(\theta\in D_n(X^n))\ge P_{\theta}(E_n)\ge  1-\frac1{1-r}x_n\). As this holds for any \(r>\alpha_n \) we have \(P_{\theta}(\theta\in D_n(X^n))\ge 1-\frac1{1-\alpha_n}x_n\).

\subsubsection{Proof of \cref{lem:coverageofenlargedcrediblesets}}\label{app:confidencesetsenlarged}
	Let \(E_n=\set{\Pi(B_{k_n}(\theta)\mid X^n)\ge r}\) be the event that \(B_{k_n}(\theta)\) has posterior mass at least \(r\), \(r>\alpha_n\). 
	It follows from \cref{lem1} that \(P_{\theta_{0, n}}(E_n)\ge  1-\frac1{1-r}x_n\). 
	As \(D_n(X^n)\) has at least \(1-\alpha_n\) posterior mass, \(D_n(X^n)\) and \(B_{k_n}(\theta)\) cannot be disjoint on the event \(E_n\), as \(1-\alpha_n+r>1\). Hence \(\theta\in C_n(X^n)\) on \(E_n\). So \(P_{\theta}(\theta\in C_n(X^n))\ge P_{\theta}(E_n)\ge  1-\frac1{1-r}x_n\). As this holds for any \(r>\alpha_n \) we have \(P_{\theta}(\theta\in C_n(X^n))\ge 1-\frac1{1-\alpha_n}x_n\).

\subsection{Proof of \cref{thm:posteriorodds}}\label{app:postodds}
	From the posterior convergence condition on \(A_n\) it follows that \(P_{\theta}\Pi_n(B_n\mid X^n)\le  a_n\). Hence the first result follows from the second, so we assume \(P_{\theta}\Pi_n(B_n\mid X^n)\le  b_n\) in what follows. 
	Let \(E_n=\set{\Pi_n(A_n\mid X_n)\ge 1/2}\) be the event that the posterior gives at least mass \(1/2\)  to \(A_n\). It follows from \cref{lem1} that \(P_{\theta}(E_n)\ge 1-2a_n.\)
	So	\begin{align*}
	P_{\theta}(F_n>r_n) 
	\le & P_{\theta} \rh{ \Pi_n(B_n\mid X_n)\ge t_n/2 } + 2a_n.
	\end{align*}  
	The probability on the right is by the Markov inequality bounded by \[\frac 2{t_n}P_{\theta}\Pi_n(B_n\mid X_n)\le \frac{2b_n}{t_n}.\] We thus arrive at the result
	\[
	P_{\theta}(F_n> t_n)\le 
	2a_n+\frac{2b_n}{t_n}.
	\]

\section{Auxiliary results}\label{sec:aux}

\begin{lemma}\label{lem:inequalityofanepowerdividedbyoneminusanepower}
	Let \(C\ge 2 \). For all  \(x\ge \sqrt{2/C}\),
	\begin{align*}
	\frac{e^{-Cx}}{1-e^{-x}}\le e^{-Cx/4}.
	\end{align*}
\end{lemma}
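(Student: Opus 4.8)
The plan is to rewrite the claim as an equivalent additive inequality and then reduce it to a single boundary point by monotonicity. First I would note that since $x>0$ we have $1-e^{-x}>0$, so the inequality $\frac{e^{-Cx}}{1-e^{-x}}\le e^{-Cx/4}$ is equivalent to $e^{-Cx}\le e^{-Cx/4}(1-e^{-x})$, i.e.\ to $e^{-3Cx/4}\le 1-e^{-x}$, i.e.\ to $e^{-3Cx/4}+e^{-x}\le 1$. Writing $h(x)=e^{-3Cx/4}+e^{-x}$, I observe that $h'(x)=-\tfrac{3C}{4}e^{-3Cx/4}-e^{-x}<0$, so $h$ is strictly decreasing on $(0,\infty)$. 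Hence it suffices to verify $h(x)\le 1$ at the smallest admissible value $x=\sqrt{2/C}$.

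Next I would substitute $t=\sqrt{2/C}$, which ranges over $(0,1]$ as $C$ ranges over $[2,\infty)$, and use $C=2/t^2$ to compute $\tfrac{3C}{4}\,t=\tfrac34\sqrt{2C}=\tfrac{3}{2t}$. The required bound then becomes $e^{-3/(2t)}+e^{-t}\le 1$ for $t\in(0,1]$. Applying the elementary estimate $e^{-t}\le 1-t+\tfrac{t^2}{2}$, this reduces to $e^{-3/(2t)}\le t-\tfrac{t^2}{2}=t\bigl(1-\tfrac t2\bigr)$, and since $1-\tfrac t2\ge\tfrac12$ on $(0,1]$ it suffices to prove the cleaner inequality $e^{-3/(2t)}\le\tfrac t2$.

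Finally I would dispatch $e^{-3/(2t)}\le t/2$ by setting $u=1/t\ge 1$, turning it into $2u\le e^{3u/2}$. This follows from $e^{3u/2}\ge 1+\tfrac{3u}{2}+\tfrac12\bigl(\tfrac{3u}{2}\bigr)^2=1+\tfrac{3u}{2}+\tfrac{9u^2}{8}$, because $1+\tfrac{3u}{2}+\tfrac{9u^2}{8}-2u=\tfrac{9u^2}{8}-\tfrac u2+1$ is a quadratic in $u$ with negative discriminant and hence strictly positive for every $u$. This closes the chain of reductions.

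The only mildly delicate point is the reduction to the boundary value $x=\sqrt{2/C}$: one must be certain the worst case occurs at the left endpoint of the admissible range, which the sign of $h'$ settles at once. Beyond that the argument is a routine cascade of one-variable estimates, so I do not expect a genuine obstacle; the only care needed is in bookkeeping the substitution $t=\sqrt{2/C}$ and keeping track of the range $t\in(0,1]$ (equivalently $u\ge 1$), where each inequality used above is valid.
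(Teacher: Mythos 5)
Your proof is correct, but it takes a genuinely different route from the paper's. You convert the claim into the equivalent additive form $e^{-3Cx/4}+e^{-x}\le 1$, note that the left-hand side is decreasing in $x$, and thereby reduce the whole statement to the single boundary point $x=\sqrt{2/C}$, which you then settle via the substitution $t=\sqrt{2/C}\in(0,1]$ and a cascade of polynomial estimates ($e^{-t}\le 1-t+\tfrac{t^2}{2}$, then $e^{3u/2}\ge 1+\tfrac{3u}{2}+\tfrac{9u^2}{8}$ with a negative-discriminant check); each step in this chain is valid on the stated range, so the argument closes. The paper instead argues pointwise, with no reduction to an endpoint: it bounds $1-e^{-x}=\int_0^x e^{-y}\,dy\ge xe^{-x}$, uses $C-1\ge C/2$ (this is where $C\ge 2$ enters) to get $e^{-Cx}/(1-e^{-x})\le e^{-Cx/2}/x$, splits off one factor $e^{-Cx/4}$, and absorbs the leftover $x^{-1}e^{-Cx/4}\le x^{-1}e^{-1/(2x)}$ (this is where $x\ge\sqrt{2/C}$ enters) by observing that $y\mapsto ye^{-y/2}$ is maximized at $y=2$ with value $2/e<1$. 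The paper's proof is shorter and makes transparent exactly where each hypothesis is consumed; yours has the conceptual advantage of isolating the worst case, making explicit that validity on the entire range $x\ge\sqrt{2/C}$ hinges only on the left endpoint, at the cost of a longer chain of routine one-variable bounds. Both arguments in fact deliver strict inequality.
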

\begin{proof}
	Note that \begin{align*}
	1-e^{-x}&=\int_0^{x}e^{-y}dy\\
	&\ge xe^{-x}.
	\end{align*}
	Using this  and the fact that \(C-1\ge C/2\), we see that
	\begin{align*}
	\frac{e^{-Cx}}{1-e^{-x}}\le & \frac{e^{-Cx}}{xe^{-x}}= \frac{e^{-(C-1)x}}{x}\le \frac{e^{-Cx/2}}{x}=\frac{e^{-Cx/4}}{x}e^{-Cx/4}.
	\end{align*}
	As \(x\ge \sqrt{2/C}\)  is equivalent to \(Cx/4\ge x^{-1}/2\) and \(x>0\), we have 
	\begin{align*}
	\frac{e^{-Cx}}{1-e^{-x}}\le & x^{-1}e^{-x^{-1}/2}e^{-Cx/4}.
	\end{align*}
	One verifies that \(f(y)=ye^{-y/2}\) attains its maximum on \(\RR\) at \(y=2\) and \(f(2)=2/e<1\). It now follows that 
	\[\frac{e^{-Cx}}{1-e^{-x}}\le e^{-Cx/4}.\] 
\end{proof}

\begin{lemma}\label{lem:sqrtoneminusxislessthanorequaltooneminusxovertwo}
	For \(x\in[0,1]\), \(\sqrt{1-x}\le 1-x/2\). 
\end{lemma}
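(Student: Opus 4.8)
The plan is to reduce the claim to a squared inequality, which is the standard trick for bounding a square root by a polynomial. First I would observe that on $[0,1]$ both sides are nonnegative: the left-hand side $\sqrt{1-x}$ is a square root, hence $\ge 0$, while the right-hand side satisfies $1-x/2\ge 1-\tfrac12=\tfrac12>0$ for every $x\in[0,1]$. Since the squaring map is monotone on the nonnegative reals, the desired inequality $\sqrt{1-x}\le 1-x/2$ is therefore equivalent to its squared form $1-x\le(1-x/2)^2$.

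Next I would expand the right-hand side, $(1-x/2)^2=1-x+x^2/4$, and cancel the common terms $1-x$ from both sides. What remains is the inequality $0\le x^2/4$, which holds for every real $x$, and in particular on $[0,1]$. This establishes the claim.

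There is essentially no genuine obstacle in this argument; the only step that requires any care is the verification that $1-x/2$ remains strictly positive on $[0,1]$, since this is precisely what legitimises passing to the squared inequality. (Without nonnegativity of both sides, squaring need not preserve the direction of the inequality.) Once that observation is in place, the remainder is a one-line algebraic identity.
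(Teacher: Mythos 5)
Your proof is correct, but it takes a different route from the paper's. You square both sides: after checking that both sides are nonnegative on $[0,1]$ (indeed $1-x/2\ge\tfrac12>0$ there), the claim reduces to $1-x\le(1-x/2)^2=1-x+x^2/4$, i.e.\ to $0\le x^2/4$. The paper instead argues by derivative comparison: with $f(x)=\sqrt{1-x}$ and $g(x)=1-x/2$, it notes $f(0)=g(0)$ and $f'(x)=-\tfrac{1}{2\sqrt{1-x}}\le-\tfrac12=g'(x)$ on $[0,1]$, so $g-f$ is nondecreasing from $0$ and hence nonnegative. Your algebraic argument is more elementary (no calculus) and sidesteps a small blemish in the paper's version, namely that $f$ is not differentiable at $x=1$ (one must argue on $[0,1)$ and conclude at the endpoint by continuity). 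The paper's monotone-comparison technique, on the other hand, is the one that scales to situations where squaring does not close the argument, e.g.\ bounding a concave function by higher-order Taylor polynomials; here, though, the squaring trick is the shortest complete proof.
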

\begin{proof}
	Define \(f(x)=\sqrt{1-x}\) and \(g(x)=1-x/2\). Note that \(f(0)=g(0)\) and \(f'(x)=-\frac1{2\sqrt{1-x}}\le -\frac12 = g'(x)\), for all \(x\in[0,1]\). It follows that \(g(x)\ge f(x)\) for all \(x\in[0,1]\). 
\end{proof}

\begin{lemma}\label{lem:oneplusxdivrtothepowerrissmallerthanetothepowerx} 
	For all positive integers \(r\) and real numbers \(x>-r,\) \((1+x/r)^r \le e^x\).
\end{lemma}
\begin{proof}
	Let for \(x>-r\), \(f(x)= r\log(1+x/r)\) and \(g(x)=x\). Then \(f'(x)=\frac1{1+x/r}\) and \(g'(x)=1\). It follows that \(f'(x)\le g'(x)\), when \(x\ge 0\), \(f'(x)>g'(x) \) when \(-n<x<0\) and \(f(0)=g(0)\). It follows that \(f(x)\le g(x)\) for all \(x>-r\). As \(y\to e^y\) is increasing, for all real \(y\), it follows that for all  \(x>-n\), \((1+x/r)^r = e^{f(x)}\le e^{g(x)}=e^x\).
\end{proof}

\begin{lemma}\label{lem:boundforbinomialsum}
For $x\in[0,1]$, 
	\[
	\sum_{k=1}^{n-1} \binom nk x^{k(n-k)} \le 2\rh{(1+x^{n/2})^n -1} \le 2nx^{n/2}  e ^ {nx^{n/2}}.  
	\]
\end{lemma}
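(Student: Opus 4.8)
The plan is to prove the two inequalities in sequence, starting from the left. For the first inequality, I would compare the sum $\sum_{k=1}^{n-1}\binom nk x^{k(n-k)}$ term by term against a cleaner quantity. The obstacle in the exponent is the product $k(n-k)$, which varies with $k$: it is smallest at the endpoints $k=1$ and $k=n-1$, and largest in the middle. Since $x\in[0,1]$, the map $x\mapsto x^{k(n-k)}$ is \emph{increasing} in the exponent when we decrease it, so to get an upper bound I want to \emph{lower-bound} $k(n-k)$. The natural bound is $k(n-k)\ge k\cdot(n/2)$ for $k\le n/2$, since then $n-k\ge n/2$. By the symmetry $\binom nk=\binom n{n-k}$ and $k(n-k)=(n-k)k$ of the summand, I can fold the sum around $k=n/2$: write
\[
\sum_{k=1}^{n-1}\binom nk x^{k(n-k)}\le 2\sum_{1\le k\le n/2}\binom nk x^{k(n-k)}\le 2\sum_{1\le k\le n/2}\binom nk \bigl(x^{n/2}\bigr)^{k},
\]
using $x^{k(n-k)}\le x^{kn/2}=(x^{n/2})^k$ for $k\le n/2$. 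Extending the sum to all $k$ from $1$ to $n$ (adding nonnegative terms) and recognizing the binomial theorem gives $\sum_{k=1}^n\binom nk (x^{n/2})^k=(1+x^{n/2})^n-1$, which yields the first inequality $\sum_{k=1}^{n-1}\binom nk x^{k(n-k)}\le 2\bigl((1+x^{n/2})^n-1\bigr)$. (A small bookkeeping point to handle cleanly is the central term when $n$ is even, $k=n/2$, which is counted once rather than doubled; since all terms are nonnegative, doubling only helps the inequality, so this causes no difficulty.)

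For the second inequality I would set $y=x^{n/2}\in[0,1]$ and show $(1+y)^n-1\le ny\,e^{ny}$. The clean way is to bound $(1+y)^n$ above by $e^{ny}$ via $1+y\le e^{y}$, and then handle the subtracted $1$. Concretely, $(1+y)^n-1=\sum_{j=1}^n\binom nj y^j$, and I would bound $\binom nj\le n^j/j!$ to get $\sum_{j=1}^n\binom nj y^j\le \sum_{j=1}^\infty \frac{(ny)^j}{j!}=e^{ny}-1$. Then using $e^{t}-1\le t\,e^{t}$ for $t\ge0$ (which follows from $e^t-1=\int_0^t e^s\,ds\le t\,e^t$) with $t=ny$ gives $(1+y)^n-1\le ny\,e^{ny}$. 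Multiplying by $2$ and substituting back $y=x^{n/2}$ produces the claimed bound $2nx^{n/2}e^{nx^{n/2}}$.

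The main obstacle is purely the exponent $k(n-k)$ in the first step: it is not linear in $k$, so the sum is not directly a binomial expansion, and the key idea is the symmetry-plus-endpoint bound $k(n-k)\ge kn/2$ valid on the lower half $k\le n/2$, combined with folding the sum to exploit $\binom nk=\binom n{n-k}$. Once that reduction to a genuine binomial sum is made, everything else is the elementary exponential estimates $1+y\le e^y$, $\binom nj\le n^j/j!$, and $e^t-1\le t e^t$, all of which are routine and several of which (for instance the style of argument in \cref{lem:oneplusxdivrtothepowerrissmallerthanetothepowerx}) already appear in the surrounding auxiliary results.
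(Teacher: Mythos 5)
Your proposal is correct and follows essentially the same route as the paper's proof: fold the sum by the symmetry $\binom nk=\binom n{n-k}$, bound $k(n-k)\ge kn/2$ on the lower half, extend to a full binomial sum to obtain $2\rh{(1+x^{n/2})^n-1}$, and finish with $e^t-1\le te^t$. The only (immaterial) difference is that you establish $(1+x^{n/2})^n-1\le e^{nx^{n/2}}-1$ by comparing the binomial expansion with the exponential series via $\binom nj\le n^j/j!$, whereas the paper applies its auxiliary bound $(1+x/r)^r\le e^x$ directly.
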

\begin{proof}
	Define $a_k=\binom nk x^{k(n-k)}$. Note that $a_k=a_{n-k}$. Using that $x\in[0,1]$ and the fact that $n-k\ge n/2$, for all $k\in\set{1,\ldots,\floor{n/2}}$, and the binomium of Newton, we see, 
	\begin{align*}
		\sum_{k=1}^{n-1} \binom nk x^{k(n-k)} 
		\le  &   2\sum_{k=1}^{\floor{n/2}} \binom nk x^{kn/2}\\
		\le  &   2\sum_{k=1}^{n} \binom nk x^{kn/2}\\
		= & 2\rh{(1+x^{n/2})^n -1}.
	\end{align*}
	
	Using \cref{lem:oneplusxdivrtothepowerrissmallerthanetothepowerx} we see 
	\begin{align*}
		(1+x^{n/2})^n = & \rh{1+\frac{nx^{n/2}}n}^n
		\le  e ^ {nx^{n/2}} . 	
	\end{align*}
	Using that $e^x-1\le xe^x$ for all $x\ge 0$, we see 
	\[
	\sum_{k=1}^n \binom nk x^{k(n-k)} \le 2nx^{n/2}  e ^ {nx^{n/2}} .
	\]
\end{proof}

\begin{lemma}\label{lem:upperboundforr}
For $r\in(0,1)$, and $m_1,m_2\in\set{0,\ldots,\floor{n/2}}$,  
	\begin{align*}
		\frac12\rh{\frac r{1-r}\bigvee \frac{1-r}r}^{m_2-m_1}\le \frac{r^{m_1}(1-r)^{n-m_1}+r^{n-m_1}(1-r)^{m_1}}{r^{m_2}(1-r)^{n-m_2}+r^{n-m_2}(1-r)^{m_2}}\\
		\le 2\rh{\frac r{1-r}\bigvee \frac{1-r}r}^{m_2-m_1}.
	\end{align*}
\end{lemma}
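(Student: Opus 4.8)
The plan is to substitute $u=r/(1-r)$ and thereby turn the symmetric expression $r^m(1-r)^{n-m}+r^{n-m}(1-r)^m$ into a common factor times $u^m+u^{n-m}$. Concretely, writing $g(m)=u^m+u^{n-m}$, one has $r^m(1-r)^{n-m}+r^{n-m}(1-r)^m=(1-r)^n g(m)$, so the quotient in the statement equals $g(m_1)/g(m_2)$ and all dependence on the prefactor $(1-r)^n$ cancels. The target bound is then $\frac12 w^{m_2-m_1}\le g(m_1)/g(m_2)\le 2 w^{m_2-m_1}$, where $w=\frac r{1-r}\bigvee\frac{1-r}r=u\vee u^{-1}$.

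First I would exploit the symmetry $u\mapsto 1/u$. Evaluating $g$ at $1/u$ gives $u^{-m}+u^{-(n-m)}=u^{-n}g(m)$, so the ratio $g(m_1)/g(m_2)$ is invariant under replacing $u$ by $1/u$; the quantity $u\vee u^{-1}$ is likewise invariant. Hence without loss of generality I may assume $u\ge1$, in which case $w=u$. Next, using the hypothesis $m_1,m_2\in\set{0,\ldots,\floor{n/2}}$, each index satisfies $m_i\le n-m_i$, so (as $u\ge1$) $u^{m_i}\le u^{n-m_i}$. This pins down $g(m_i)$ between its dominant term and twice it: $u^{n-m_i}\le g(m_i)\le 2u^{n-m_i}$.

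The two-sided bound then follows immediately by dividing these estimates: for the upper bound I use $g(m_1)\le 2u^{n-m_1}$ together with $g(m_2)\ge u^{n-m_2}$ to get $g(m_1)/g(m_2)\le 2u^{m_2-m_1}$, and for the lower bound I use $g(m_1)\ge u^{n-m_1}$ together with $g(m_2)\le 2u^{n-m_2}$ to get $g(m_1)/g(m_2)\ge\frac12 u^{m_2-m_1}$. There is no genuinely hard step here; the only point requiring a little care is the reduction to $u\ge1$, i.e.\ checking that both the ratio and the maximum $u\vee u^{-1}$ are unchanged under $u\mapsto 1/u$, which spares a separate case analysis for $r<\tfrac12$ and $r>\tfrac12$. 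The constants $2$ and $\tfrac12$ in the statement are precisely the factor by which $g(m)$ can deviate from its leading term $u^{n-m}$, so these are the natural constants for this crude estimate.
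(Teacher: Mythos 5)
Your proof is correct, and it rests on the same core estimate as the paper's: a sum of two positive terms lies between its larger term and twice that term, and the ratio of the dominant terms comes out exactly as $\left(\frac{r}{1-r}\vee\frac{1-r}{r}\right)^{m_2-m_1}$. Where you differ is in how that ratio of dominant terms is identified. The paper bounds each sum $a+b$ between $a\vee b$ and $2(a\vee b)$, and then computes the quotient of the two maxima by an explicit two-level case analysis: first on which of the two terms realizes the maximum in the numerator, and then on $r\le\frac12$ versus $r>\frac12$, reducing each resulting expression (a minimum of two powers of $\frac{r}{1-r}$) to the stated power of $\frac{r}{1-r}\vee\frac{1-r}{r}$. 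Your substitution $u=r/(1-r)$, the cancellation of the common factor $(1-r)^n$, and the observation that both $g(m_1)/g(m_2)$ and $u\vee u^{-1}$ are invariant under $u\mapsto 1/u$ let you assume $u\ge1$ once and for all; after that, $m_i\le n-m_i$ immediately singles out $u^{n-m_i}$ as the dominant term of $g(m_i)$, and the case analysis disappears. What the paper's route offers is that it stays in the original parametrization in $r$ with purely mechanical case checks; what your route buys is brevity, the elimination of those cases by a symmetry that is genuinely present in the problem, and a transparent explanation of why the constants $\frac12$ and $2$ appear. Both arguments are complete.
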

\begin{proof}
Note that
	\begin{align*}
		& \frac{{r^{m_1}(1-r)^{n-m_1}\vee r^{n-m_1}(1-r)^{m_1}}}{2\rh{{r^{m_2}(1-r)^{n-m_2}\vee r^{n-m_2}(1-r)^{m_2}}}}\\ 
		\le &\frac{r^{m_1}(1-r)^{n-m_1}+r^{n-m_1}(1-r)^{m_1}}{r^{m_2}(1-r)^{n-m_2}+r^{n-m_2}(1-r)^{m_2}} \\
		\le & \frac{2\rh{r^{m_1}(1-r)^{n-m_1}\vee r^{n-m_1}(1-r)^{m_1}}}{{r^{m_2}(1-r)^{n-m_2}\vee r^{n-m_2}(1-r)^{m_2}}}.
		\end{align*}
Let us calculate \begin{equation}\label{eq:proofofr}
	\frac{{r^{m_1}(1-r)^{n-m_1}\vee r^{n-m_1}(1-r)^{m_1}}}{{r^{m_2}(1-r)^{n-m_2}\vee r^{n-m_2}(1-r)^{m_2}}}.
\end{equation}
We consider the two cases. First suppose 
$r^{m_1}(1-r)^{n-m_1}\vee r^{n-m_1}(1-r)^{m_1}=r^{m_1}(1-r)^{n-m_1}$. In this case, \cref{eq:proofofr} is equal to 
\begin{align}
& {r^{m_1-m_2}(1-r)^{m_2-m_1}\wedge r^{m_1+m_2-n}(1-r)^{n-m_1-m_2}}\nonumber \\
= & {\rh{\frac r{1-r}}^{m_1-m_2}\bigwedge \rh{\frac r{1-r}}^{m_1+m_2-n}}. \label{eq:casesinr1}
\end{align}
When $r^{m_1}(1-r)^{n-m_1}\vee r^{n-m_1}(1-r)^{m_1}=r^{n-m_1}(1-r)^{m_1}$. Then \cref{eq:proofofr} is equal to 
\begin{align}
& {r^{n-m_1-m_2}(1-r)^{m_1+m_2-n}\wedge r^{m_2-m_1}(1-r)^{m_1-m_2}}\nonumber\\
= & {\rh{\frac r{1-r}}^{n-m_1-m_2}\bigwedge \rh{\frac r{1-r}}^{m_2-m_1}}. \label{eq:casesinr2}
\end{align}
We consider the case $r\le 1/2$ and $r>1/2$ seperately. First suppose that $r\le 1/2$. Note that for every choice of $m_1,m_2\in\set{0,\ldots,\floor{n/2}}$,  $m_1-m_2\ge m_1+m_2-n$, so \cref{eq:casesinr1} is equal to \[
\rh{\frac r{1-r}}^{m_1-m_2}
\]
and \cref{eq:casesinr2} is equal to 
\[
\rh{\frac r{1-r}}^{n-m_1-m_2}
\]
Using that for all $m_1,m_2\in \set{0,\ldots,\floor{n/2}}$, $m_1-m_2\le n-m_1-m_2$, we see that  \cref{eq:proofofr} is equal to \[
\rh{\frac r{1-r}}^{m_1-m_2}.\]

Now consider the case $r>1/2$. Then 
\cref{eq:casesinr1} is equal to
\[
\rh{\frac r{1-r}}^{m_1+m_2-n}\] and 
\cref{eq:casesinr2} is equal to
\[
\rh{\frac r{1-r}}^{m_2-m_1}.\]
So \cref{eq:proofofr} is equal to \[
\rh{\frac r{1-r}}^{m_2-m_1}.\]
So for all $r\in(0,1)$, 
\cref{eq:proofofr} is equal to \[
\rh{\frac r{1-r}\bigvee \frac{1-r}r}^{m_2-m_1}.\]
\end{proof}


\begin{lemma}\label{lem:boundonthefractionofthebetas}
	For $\alpha,\beta>0$, and $m_1,m_2\in\set{0,\ldots,\floor{n/2}}$, and $n\ge \alpha+\beta-2$, 
	\[
	\frac{B(m_1+\alpha, n-m_1+\beta)+B(n-m_1+\alpha, m_1+\beta)}{B(m_2+\alpha, n-m_2+\beta)+B(n-m_2+\alpha, m_2+\beta)}\le (2e)^{n}. 
	\]
\end{lemma}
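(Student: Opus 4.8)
The first step is to clear a common Gamma factor. In every Beta function that appears the two arguments sum to $n+\alpha+\beta$, so $B(m+\alpha,n-m+\beta)=\Gamma(m+\alpha)\Gamma(n-m+\beta)/\Gamma(n+\alpha+\beta)$ and likewise for the other three terms; the factor $\Gamma(n+\alpha+\beta)$ is identical in all of them and cancels from the quotient. Writing $A(k):=\Gamma(k+\alpha)\Gamma(n-k+\beta)$ and observing that $\Gamma(n-m+\alpha)\Gamma(m+\beta)=A(n-m)$, the quantity to be bounded becomes $\rh{A(m_1)+A(n-m_1)}/\rh{A(m_2)+A(n-m_2)}$.

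Next I would reduce to a single max/min estimate. Put $A_{\max}=\max_{0\le k\le n}A(k)$ and $A_{\min}=\min_{0\le k\le n}A(k)$. Each summand in the numerator is at most $A_{\max}$ and each summand in the denominator is at least $A_{\min}$, so the quotient is at most $2A_{\max}/\rh{2A_{\min}}=A_{\max}/A_{\min}$; in particular the two factors of $2$ cancel and it suffices to prove $A_{\max}/A_{\min}\le(2e)^n$.

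To estimate $A_{\max}/A_{\min}$ I would exploit the unimodality of $A$. The ratio $A(k+1)/A(k)=(k+\alpha)/(n-1-k+\beta)$ is strictly increasing in $k$ and passes through $1$ once, so $A$ has a unique interior minimiser $k^\ast$ near $(n-1+\beta-\alpha)/2$ and attains its maximum at an endpoint, whence $A_{\max}/A_{\min}=\max\rh{A(0),A(n)}/A(k^\ast)$. Telescoping gives $A(0)/A(k^\ast)=\prod_{k=0}^{k^\ast-1}\frac{n-1-k+\beta}{k+\alpha}$ (and the $\alpha\leftrightarrow\beta$ mirror for $A(n)/A(k^\ast)$). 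The plan is to compare this product factor-by-factor with the central binomial coefficient $\binom{n}{k^\ast}\le 2^n$: dividing by $\binom{n}{k^\ast}=\prod_{k=0}^{k^\ast-1}\frac{n-k}{k+1}$ leaves two residual products of telescoping type, $\Gamma(k^\ast+1)\Gamma(\alpha)/\Gamma(k^\ast+\alpha)$ and $\prod_{k}\frac{n-1-k+\beta}{n-k}$, which I would control with the elementary inequality $(1+x/r)^r\le e^x$ from \cref{lem:oneplusxdivrtothepowerrissmallerthanetothepowerx} together with the hypothesis $n\ge\alpha+\beta-2$, so that the residual contributes at most $e^n$ and $A_{\max}/A_{\min}\le 2^n e^n=(2e)^n$.

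The main obstacle is that these residual products are not uniformly small: the hypothesis only forces $\alpha+\beta\le n+2$, and for $\alpha$ or $\beta$ comparable to $n$ a single residual such as $\Gamma(k^\ast+\alpha)/\Gamma(\alpha)$ is itself exponentially large, so the naive ``binomial times polynomial correction'' splitting degenerates. The real content is one uniform estimate showing that, over all admissible $(\alpha,\beta)$, the shrinkage of one residual exactly compensates the growth of the other. The cleanest rigorous route is the Legendre duplication formula: since both arguments of $A(k^\ast)$ are within $O(1)$ of $(n+\alpha+\beta)/2$, one has $A(k^\ast)\gtrsim \Gamma\rh{\tfrac{n+\alpha+\beta}{2}}^2\asymp 2^{-(n+\alpha+\beta)}\Gamma(n+\alpha+\beta)$, after which $A(0)/A(k^\ast)$ collapses to a multiple of $2^{\,n+\alpha+\beta}\,\Gamma(\alpha)\Gamma(n+\beta)/\Gamma(n+\alpha+\beta)$; here $\Gamma(n+\beta)/\Gamma(n+\alpha+\beta)\le 1$ and $n\ge\alpha+\beta-2$ keep the surviving power of $2$ and the Gamma ratio jointly below $(2e)^n$.
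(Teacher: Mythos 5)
Your opening reductions are correct and coincide with the paper's own skeleton: the paper also cancels the common Gamma factor and then bounds every numerator term by a maximum and every denominator term by a minimum, only packaged via generalized binomial coefficients, $\Gamma(m+\alpha)\Gamma(n-m+\beta)=\Gamma(n+\alpha+\beta-1)\big/\binom{n+\alpha+\beta-2}{m+\alpha-1}$. Your duplication-formula idea for $A_{\min}$ is also sound and can even be cleaned up: by convexity of $\log\Gamma$ one has $A(k)=\Gamma(k+\alpha)\Gamma(n-k+\beta)\ge\Gamma\bigl(\tfrac{n+\alpha+\beta}{2}\bigr)^2$ for \emph{every} $k$, and Legendre duplication together with $\Gamma(z+1/2)\le\Gamma(z)\sqrt{z}$ turns this into $A_{\min}\ge 2\sqrt{2\pi}\,(n+\alpha+\beta)^{-1/2}\,2^{-(n+\alpha+\beta)}\,\Gamma(n+\alpha+\beta)$, so no localization of $k^\ast$ is needed at all.

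The genuine gap is your final sentence, which asserts rather than proves the concluding inequality, and the assertion is false as stated. You are left with (a multiple of) $2^{n+\alpha+\beta}\,\Gamma(\alpha)\Gamma(n+\beta)/\Gamma(n+\alpha+\beta)$, and you dispose of it using only $\Gamma(n+\beta)/\Gamma(n+\alpha+\beta)\le 1$ and $n\ge\alpha+\beta-2$; that leaves the factor $\Gamma(\alpha)$ completely uncontrolled. The three Gamma factors must be kept together as $B(\alpha,n+\beta)=\Gamma(\alpha)\Gamma(n+\beta)/\Gamma(n+\alpha+\beta)$, and $B(\alpha,n+\beta)\le 1$ holds only when $\alpha\ge1$; for $\alpha\in(0,1)$ it is of order $\Gamma(\alpha)\sim 1/\alpha$. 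No argument can close this gap, because the lemma as stated fails for small $\alpha$: take $n\ge2$, $\beta=1$, $m_1=0$, $m_2=\floor{n/2}$; the numerator contains $B(\alpha,n+1)=\Gamma(\alpha)\,n!/\Gamma(n+1+\alpha)\to\infty$ as $\alpha\downarrow0$, while the denominator converges to a positive constant, so the quotient eventually exceeds $(2e)^n$. (The paper's own proof carries the same hidden restriction: its bound of the numerator by $2$ uses $\binom{n+\alpha+\beta-2}{m_1+\alpha-1}\ge1$, which requires $m_1+\alpha-1\ge0$, i.e.\ $\alpha,\beta\ge1$ when $m_1=0$.) If one restricts to $\alpha,\beta\ge1$, your route does close with some bookkeeping: $B(\alpha,n+\beta)\vee B(\beta,n+\alpha)\le1$, $2^{n+\alpha+\beta}\le4\cdot4^n$ by the hypothesis, and the residual $\sqrt{n}$ from duplication is absorbed since $4^n\sqrt{n}=o((2e)^n)$, with finitely many small $n$ checked separately; but in that regime the paper's argument is shorter, bounding each numerator term above by $\Gamma(n+\alpha+\beta-1)$ and each denominator term below by $(2e)^{-(n+\alpha+\beta-2)/2}\,\Gamma(n+\alpha+\beta-1)$.
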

\begin{proof}
	We have \begin{align*}
		& \frac{B(m_1+\alpha, n-m_1+\beta)+B(n-m_1+\alpha, m_1+\beta)}{B(m_2+\alpha, n-m_2+\beta)+B(n-m_2+\alpha, m_2+\beta)} \\
		= & \frac{\Gamma(m_1+\alpha)\Gamma( n-m_1+\beta)+\Gamma(n-m_1+\alpha)\Gamma( m_1+\beta)}{\Gamma(m_2+\alpha)\Gamma( n-m_2+\beta)+\Gamma(n-m_2+\alpha)\Gamma (m_2+\beta)}\\
		= &  \frac{ \frac{\Gamma(n+\alpha+\beta-1)}{\binom{n+\alpha+\beta-2}{m_1+\alpha - 1}} +\frac{\Gamma(n+\alpha+\beta-1)}{\binom{n+\alpha+\beta-2}{m_1+\beta - 1}}}{ \frac{\Gamma(n+\alpha+\beta-1)}{\binom{n+\alpha+\beta-2}{m_2+\alpha - 1}} +\frac{\Gamma(n+\alpha+\beta-1)}{\binom{n+\alpha+\beta-2}{m_2+\beta - 1}}}\\ 
		= &  \frac{ \frac{1}{\binom{n+\alpha+\beta-2}{m_1+\alpha - 1}} +\frac{1}{\binom{n+\alpha+\beta-2}{m_1+\beta - 1}}}{ \frac{1}{\binom{n+\alpha+\beta-2}{m_2+\alpha - 1}} +\frac{1}{\binom{n+\alpha+\beta-2}{m_2+\beta - 1}}}\\
		\le  &  \frac{2}{ 2(2e)^{-\ceiling{n+\alpha+\beta-2}/2}}\\  
		\le  & (2e)^{n}.
	\end{align*}
\end{proof}


\bibliographystyle{imsart-nameyear}
\bibliography{pbm3}

\end{document}